\documentclass[11 pt]{amsart}
\usepackage{fancyhdr}
\usepackage{amsmath}
\usepackage{amssymb}
\usepackage{amsthm}
\usepackage{bbm}
\usepackage{verbatim}
\usepackage{latexsym}
\usepackage{color}
\usepackage{mathdots}
\usepackage{xypic}
\usepackage[margin=1.25in]{geometry}
\usepackage{enumitem}
\usepackage{stmaryrd}
\usepackage{esint}
\usepackage{cite}
\usepackage{hyperref}
\usepackage{mathtools}

\newcommand{\bb}[1]{\mathbb{#1}}
\newcommand{\cc}[1]{\mathcal{#1}}

\numberwithin{equation}{section}
\newtheorem{theorem}{Theorem}
\newtheorem{corollary}[theorem]{Corollary}
\newtheorem{lemma}[theorem]{Lemma}
\newtheorem{prop}[theorem]{Proposition}
\numberwithin{theorem}{section}

\renewcommand\tilde{\widetilde}

\begin{document}

\title{A Bound on the Cohomology of Quasiregularly Elliptic Manifolds}
\author{Eden Prywes}
\address{Department of Mathematics, University of California, Los Angeles}
\email{eprywes@math.ucla.edu}
\date{\today}
\maketitle
\begin{abstract}
We show that a closed, connected and orientable Riemannian manifold of dimension $d$ that admits a quasiregular mapping from $\bb R^d$ must have bounded cohomological dimension independent of the distortion of the map.
The dimension of the degree $l$ de Rham cohomology of $M$ is bounded above by $\binom{d}{l}$.  This is a sharp upper bound that proves the Bonk-Heinonen conjecture \cite{bonkheinonen}.
A corollary of this theorem answers an open problem posed by Gromov in 1981 \cite{gromov1981}.  He asked whether there exists a $d$-dimensional, simply connected manifold that does not admit a quasiregular map from $\bb R^d$.  Our result gives an affirmative answer to this question.
\end{abstract}

\section{Introduction}
Let $M$ be a closed, connected and orientable Riemannian manifold of dimension $d$.
A $K$-\textit{quasiregular mapping}, $K\ge 1$, is a continuous mapping $f\colon \bb R^d \to M$ such that $f\in W_{\text{loc}}^{1,d}(\bb R^d,M)$ and the differential, $Df : T\bb R^d \to TM$, satisfies
\begin{align*}
    \|Df(x)\|^d \le KJ_f(x)
\end{align*}
for almost every $x\in \bb R^d$, where $J_f = \det(Df)$.  If $M$ admits such a nonconstant quasiregular mapping then we call $M$ \textit{quasiregularly elliptic}.
The main result of this paper is as follows:
\begin{theorem}\label{mainthm}
Let $M$ be a closed, connected and orientable Riemannian manifold of dimension $d$.  If $M$ admits a nonconstant quasiregular mapping from $\bb R^d$, then $\dim H^l(M) \le \binom{d}{l}$, for $0 \le l \le d$, where $H^l(M)$ is the de Rham cohomology of $M$ of degree $l$.
\end{theorem}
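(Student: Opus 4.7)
My plan is to construct a linear injection $H^l(M)\hookrightarrow\Lambda^l(\mathbb{R}^d)^*$ from the cohomology into the space of constant $l$-covectors on $\mathbb{R}^d$, whose dimension is exactly $\binom{d}{l}$. The map will send a class $[\omega]$ to the ``constant part'' of the pullback $f^*\omega$ on a suitably chosen large Euclidean ball. The upshot is that any linear dependence among these constant parts would force a linear dependence in $H^l(M)$ itself, which proves the bound.

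\textbf{Setup.} By Hodge theory I fix harmonic representatives $\omega_1,\dots,\omega_N$ of a basis of $H^l(M)$ and harmonic representatives $\eta_1,\dots,\eta_N$ of the Poincar\'e-dual basis of $H^{d-l}(M)$, normalized so that $\int_M\omega_i\wedge\eta_j=\delta_{ij}$. Using Iwaniec--Martin pullback theory for quasiregular maps, the forms $f^*\omega_i$ and $f^*\eta_j$ are weakly closed and lie in $L^{d/l}_{\mathrm{loc}}$ and $L^{d/(d-l)}_{\mathrm{loc}}$ respectively, with the distortion inequality $\|Df\|^d\le KJ_f$ giving volume bounds
\[
\int_{B_r}|f^*\omega_i|^{d/l}\lesssim K\,\|\omega_i\|_\infty^{d/l}\,\mathrm{vol}\bigl(f(B_r)\bigr)\lesssim r^d.
\]

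\textbf{Construction of the map.} On each ball $B_r$ the Hodge/Poincar\'e decomposition for closed $L^{d/l}$-forms on a Euclidean ball yields $f^*\omega_i=d\alpha_i^{(r)}+c_i^{(r)}$, where $c_i^{(r)}\in\Lambda^l(\mathbb{R}^d)^*$ is the average of the coefficients of $f^*\omega_i$ over $B_r$, and the primitive satisfies $\|\alpha_i^{(r)}\|_{L^{d/l}(B_r)}\le C(d,l)\,r\,\|f^*\omega_i\|_{L^{d/l}(B_r)}$ with a constant depending only on $d$ and $l$. The assignment $\omega_i\mapsto c_i^{(r)}$ is linear with values in a space of dimension $\binom{d}{l}$. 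To prove injectivity for some (arbitrarily large) $r$, suppose for contradiction there is a nontrivial relation $\sum_i\lambda_i^{(r)}c_i^{(r)}=0$ with $\sum|\lambda_i^{(r)}|=1$. Then $\sum_i\lambda_i^{(r)}f^*\omega_i=d\beta^{(r)}$ on $B_r$; pairing against the closed form $f^*\eta_j$ and applying Stokes gives
\[
\sum_i\lambda_i^{(r)}\int_{B_r}f^*\omega_i\wedge f^*\eta_j=\int_{\partial B_r}\beta^{(r)}\wedge f^*\eta_j.
\]
By the change-of-variables formula together with Bonk--Heinonen lower volume growth, the left-hand side grows like the counting function $N(f,B_r)\gtrsim r^d/\mathrm{vol}(M)$ in absolute value, independently of the quasiregularity constant.

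\textbf{Main obstacle.} The principal difficulty is producing an upper bound on the boundary integral that beats $r^d$ with a constant independent of $K$. A direct H\"older estimate, paired with the trace and Poincar\'e inequalities, only yields $r^{d+1}$ on a generic sphere. The plan is to average over a dyadic annulus $r\in[R,2R]$ using Fubini in the radius, and then invoke a pigeonhole argument to select a radius on which the sphere-$L^p$ norms of $\beta^{(r)}$ and $f^*\eta_j$ are controlled by $R^{-1}$ times the corresponding solid-$L^p$ norms on $B_{2R}\setminus B_R$. Combined with the $K$-independent Poincar\'e constant from the ball decomposition, this should give a boundary bound of order $o(R^d)$, forcing $\sum|\lambda_i^{(r)}|\to 0$ along some sequence $r\to\infty$ and thus contradicting the normalization. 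Making this averaging scheme produce a constant depending on $d$ alone, rather than on $K$, is the key analytic step which distinguishes the sharp bound $\binom{d}{l}$ from the earlier $C(d,K)$-type bounds.
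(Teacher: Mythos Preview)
Your approach has a genuine gap at precisely the step you flag as the main obstacle: the boundary term cannot be made $o(A(B_r))$, where $A(B_r)=\int_{B_r}J_f$. With a cutoff $\psi$ and the triple H\"older exponents $d$, $d/(l-1)$, $d/(d-l)$, one has
\[
\Bigl|\int d\psi\wedge\beta^{(r)}\wedge f^*\eta_j\Bigr|
\le \|d\psi\|_d\,\|\beta^{(r)}\|_{d/(l-1)}\,\|f^*\eta_j\|_{d/(d-l)}.
\]
Here $\|d\psi\|_d$ is scale-invariant, the Sobolev--Poincar\'e inequality gives $\|\beta^{(r)}\|_{d/(l-1)}\lesssim\|\sum_i\lambda_i f^*\omega_i\|_{d/l}\lesssim K^{l/d}A(B_r)^{l/d}$, and $\|f^*\eta_j\|_{d/(d-l)}\lesssim K^{(d-l)/d}A(B_r)^{(d-l)/d}$; the product is $C(d,K,M)\,A(B_r)$, the \emph{same} order as the main term $\lambda_j^{(r)} A(B_r)$. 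Averaging over a dyadic annulus selects a good radius but does not lower the exponent, so you end up comparing $A(B_r)$ to itself and no contradiction follows. The failure is structural: your primitive $\beta^{(r)}$ comes from the Poincar\'e lemma on the ball and carries no pointwise bound in terms of $J_f$. (A minor additional issue: $A(B_r)$ need not be $\lesssim r^d$; for $K$-quasiregular maps onto compact targets it can have arbitrarily large polynomial growth.)

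The paper avoids this by never taking a primitive on the ball. Since $\int_M\alpha_i\wedge\beta_j=0$ for $i\ne j$, one has $\alpha_i\wedge\beta_j=d\tau$ \emph{on $M$}; the pulled-back primitive $f^*\tau$ then satisfies the pointwise bound $|f^*\tau|\lesssim\|Df\|^{d-1}\lesssim J_f^{(d-1)/d}$, and the corresponding error term is $O(A(B_r)^{(d-1)/d})=o(A(B_r))$. After rescaling to the unit ball along a sequence $B_n$ chosen via Rickman's hunting lemma (which guarantees the doubling $A(B_n)\le D(d)\,A(\tfrac12 B_n)$ needed to control limits) and passing to weak limits $\tilde\eta_i,\tilde\theta_j$, this yields the \emph{pointwise} vanishing $\tilde\eta_i\wedge\tilde\theta_j=0$ a.e.\ for $i\ne j$, not merely a vanishing average. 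If $k>\binom{d}{l}$, linear algebra in $\bigwedge^l\bb R^d$ at each point forces some diagonal $\tilde\eta_i\wedge\tilde\theta_i$ to vanish on a set of full measure in $B(0,1)$; a reverse H\"older inequality for $J_f$ (which itself requires $H^l(M)\ne 0$ to prove) then produces the contradiction. Your averaging map $[\omega]\mapsto c^{(r)}$ does not detect this pointwise structure, and without a primitive pulled back from $M$ there is no mechanism to beat the $O(A(B_r))$ barrier.
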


Theorem \ref{mainthm} is the first result that gives a restriction, independent of the fundamental group of $M$ and the distortion $K$ of the mapping, on quasiregular ellipticity of manifolds.  A $K$-dependent version of Theorem \ref{mainthm} was proved by Bonk and Heinonen \cite{bonkheinonen}.  They showed that $\dim H^l(M) \le C(d,l,K)$ and conjectured that the constant is independent of $K$.  Theorem \ref{mainthm} answers this with a sharp bound. The $d$-dimensional torus, $T^d = S^1\times \cdots \times S^1$, is quasiregularly elliptic and $\dim H^l(T^d) = \binom{d}{l}$.

This theorem also gives an answer to a longstanding open problem first posed by Gromov in 1981 \cite[p.\ 200]{gromov1981}. He asked whether their exists a $d$-dimensional, simply connected manifold that does not admit a nonconstant quasiregular mapping from $\bb R^d$.
Theorem \ref{mainthm} implies the following corollary.
\begin{corollary}
The simply connected manifold $M = \#^n (S^2\times S^2)$, the connected sum of $n$ copies of $S^2\times S^2$, is not quasiregularly elliptic for $n \ge 4$.
\end{corollary}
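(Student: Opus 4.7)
The plan is to derive the corollary directly from Theorem \ref{mainthm} by computing the second de Rham cohomology of $M = \#^n(S^2 \times S^2)$ and showing it exceeds the bound $\binom{4}{2} = 6$ as soon as $n \ge 4$.

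First I would verify that $M$ meets the hypotheses of Theorem \ref{mainthm}: it is a closed, connected, orientable smooth $4$-manifold (each factor $S^2 \times S^2$ is closed, connected, and orientable, and connected sum preserves these properties in the orientable category), and it admits a Riemannian metric. Simple connectedness, though not needed for the application of Theorem \ref{mainthm}, follows from van Kampen's theorem since each summand $S^2 \times S^2$ is simply connected and the gluing $3$-sphere is connected.

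Next I would compute $\dim H^2(M)$. For a single $S^2 \times S^2$, the Künneth formula gives $H^0 = H^4 \cong \mathbb{R}$, $H^1 = H^3 = 0$, and $H^2 \cong \mathbb{R}^2$, generated by the pullbacks of the volume forms on the two factors. For the connected sum of closed oriented $d$-manifolds with $d \ge 3$, a Mayer--Vietoris argument applied to the decomposition $\#^n X = (X_1 \setminus B) \cup_{S^{d-1}} \cdots \cup_{S^{d-1}} (X_n \setminus B)$ shows that in intermediate degrees $0 < l < d$ one has $H^l(\#^n X) \cong \bigoplus_{i=1}^{n} H^l(X)$, since the connecting $(d-1)$-spheres have trivial cohomology in those degrees. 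Applying this with $X = S^2 \times S^2$ and $l = 2$ yields
\begin{equation*}
    \dim H^2(M) = 2n.
\end{equation*}

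Finally I would compare with the bound. For $n \ge 4$, we have $\dim H^2(M) = 2n \ge 8 > 6 = \binom{4}{2}$. Theorem \ref{mainthm} asserts that any quasiregularly elliptic closed, connected, orientable Riemannian $4$-manifold satisfies $\dim H^2 \le 6$, so $M$ cannot be quasiregularly elliptic. The only real content here is Theorem \ref{mainthm}; the obstacle to the corollary is essentially bookkeeping, namely getting the connected-sum computation right, and there is no further analytic difficulty.
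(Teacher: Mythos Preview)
Your proposal is correct and follows essentially the same approach as the paper: compute $\dim H^2(\#^n(S^2\times S^2)) = 2n$ via K\"unneth and Mayer--Vietoris, then invoke Theorem~\ref{mainthm} to rule out quasiregular ellipticity once $2n > \binom{4}{2}=6$. The only cosmetic differences are that you appeal to van Kampen for simple connectedness (the paper argues it more informally) and that you state the Mayer--Vietoris isomorphism for all intermediate degrees $0<l<d$ rather than the paper's $1\le l\le d-2$; either range suffices for $l=2$, $d=4$.
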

\begin{proof}
Firstly, the 2-sphere $S^2$, and hence $S^2\times S^2$, is simply connected.  Furthermore, since the dimension is larger than $2$, the connected sum of simply connected manifolds is simply connected.  So $M$ is simply connected.

The sphere $S^2$ satisfies $\dim H^2(S^2) = 1$.  By the K\"unneth formula \cite[p. 47]{botttu}, $\dim H^2(S^2\times S^2) = 2$.  For $1 \le l \le d-2$, $H^l(M\#N) \cong H^l(M) \oplus H^l(N)$, whenever $M$ and $N$ are smooth manifolds by the Mayer-Vietoris Theorem \cite[p. 22]{botttu}.  Therefore $\dim H^2(M) = 2n > \binom{4}{2}$.  So by Theorem \ref{mainthm}, $M$ is not quasiregularly elliptic.
\end{proof}

Theorem \ref{mainthm} is a generalization of a classical theorem for holomorphic functions in dimension 2.  Let $M$ be a Riemann surface, by the uniformization theorem, the universal cover of $M$ is either $\widehat{\bb C},\bb C$, or $\bb D$.  If $f\colon \bb C \to M$ is holomorphic, then $f$ lifts to a holomorphic map from $\bb C$ to the universal cover of $M$.  If the universal covering space is $\bb D$, then Liouville's theorem states that $f$ is constant.  
This implies that the only compact Riemann surfaces that admit holomorphic mappings are homeomorphic to $\widehat{\bb C}$ and $S^1\times S^1$.
This proof can be applied to quasiregular mappings in dimension 2 because every quasiregular mapping $f = g \circ \phi$, where $g$ is holomorphic and $\phi \colon \bb C \to \bb C$ is a quasiconformal homeomorphism \cite[p. 247]{lehtovirtanen}.  

A $1$-quasiregular map on $\bb C$ is a holomorphic function. If we study quasiregular ellipticity for $K = 1$ in higher dimensions, then the results are as restrictive as in the $d=2$ case.    If $M$ admits a $1$-quasiregular mapping from $\bb R^d$, Bonk and Heinonen \cite[Proposition 1.4]{bonkheinonen} showed that $M$ must be a quotient of the $d$-dimensional sphere or torus.  
For manifolds of dimension 3, Theorem \ref{mainthm} is known for each $K \ge 1$.  Jormakka \cite{jormakka} showed that if $M$ is quasiregularly elliptic then $M$ must be a quotient of $S^3,T^3$, or $S^2\times S^1$.  One sees that in higher dimensions there are separate results for when $K = 1$ and when $K \ge 1$.  
In the study of $K$-quasiregular mappings for $d \ge 4$, there are very few conditions on the topology of $M$ that restrict which manifolds can be quasiregularly elliptic, independent of $K$. 

A theorem by Varopoulos gives a $K$-independent result.  It states that the polynomial order of growth of the Cayley graph of the fundamental group of a quasiregularly elliptic manifold is bounded by $d$ (see \cite[Theorem X.5.1]{vsc} or \cite[Chapter 6]{gromov2007}).  This result gives a $K$-independent bound on the size of the fundamental group of the manifold, but does not apply when the fundamental group is small, specifically when the manifold is simply connected.

A recent theorem due to Kangasniemi \cite{kangasniemi} gives a $K$-independent bound on the cohomology for manifolds that admit uniformly quasiregular self-mappings.  He proved an analogue to theorem \ref{mainthm} with the added assumption that $M$ admits a non-injective quasiregular mapping $f \colon M \to M$  such that the iterates of $f$ are also $K$-quasiregular.  Such a map is called uniformly quasiregular.
The bound in this theorem is sharp since the torus admits uniformly quasiregular self-mappings.

There are also related results when the manifold $M$ is open.  In dimension $2$, one can use the same arguments as in the compact case to deduce that $M$ is homeomorphic to $\bb R^2$ or $S^1\times \bb R$.
This result implies Picard's theorem as a corollary.  In higher dimensions, Rickman \cite{rickman1980} proved what is now known as the Rickman-Picard theorem, showing that a $K$-quasiregular map from $\bb R^d$ to the $d$-dimensional sphere $S^d$ can omit at most $C(d,K)$ points.  The fact that the constant depends on $K$ is unavoidable as seen in the constructions by Rickman \cite{rickman1985} and Drasin and Pankka\cite{drasinpankka}.

We next give an outline the proof for Theorem \ref{mainthm}.  We argue by contradiction.  Let $k > \binom{d}{l}$ and let $\alpha_1, \dots, \alpha_k$ be representatives of cohomology classes that form a basis in $H^l(M)$.  
Using Poincar\'e duality we can choose closed differential forms $\beta_1,\dots,\beta_k$ such that
\begin{align*}
    \int_M \alpha_i\wedge\beta_j =\delta_{ij},
\end{align*}
for $1 \le i,j\le k$ and where $\delta_{ij}$ is the Kronecker delta.
In previous papers on quasiregular ellipticity, $p$-harmonic forms were used instead of smooth forms arising from Poincar\'e duality.  Our approach allows us to avoid the use of this machinery.

Since we argue by contradiction, there exists a quasiregular mapping $f \colon \bb R^d \to M$.  The pullbacks, $\eta_i = f^*\alpha_i$ and $\theta_i = f^*(*\alpha_i)$ will be closed forms on $\bb R^d$.  They also satisfy local $L^p$-bounds depending on the Jacobian of $f$.  This allows us to use a rescaling procedure to obtain forms on the unit ball in $\bb R^d$ such that the limits wedge pointwise to $0$.

In the papers by Eremenko and Lewis, \cite{eremenkolewis} and \cite{lewis}, the authors applied a similar rescaling to $\cc A$-harmonic functions in order to prove the Rickman-Picard theorem for quasiregular mappings.  Instead of rescaling functions, we consider pullbacks of differential forms.  We also note that Kangasniemi \cite{kangasniemi} rescaled differential forms in the uniformly quasiregular case.  The main connection between the techniques used in this paper and the above two results is that in the limit the rescaled objects obey pointwise results.
This is the crucial ingredient of the proof.  
The rescaling captures how the map $f \colon \bb R^d \to M$ behaves on average.  Since quasiregular maps have equidistribution properties similar to holomorphic mappings, $f$ will map a large set evenly over $M$.  So the pullbacks of the differential forms, rescaled on a sequence of large balls, will converge to averages of themselves on $M$ .
The limits in this rescaling will be both nonzero and pair to $0$ pointwise; on the manifold the wedge product only integrates to $0$.

Once the differential forms on the unit ball are constructed and we know that they pair pointwise to $0$,  we see that at most $\binom{d}{l} =\dim (\bigwedge^l \bb R^d)$ of the forms can be nonzero.  This will imply that the sets where at least one of the forms is $0$ covers the entire ball, apart from a set of measure $0$.  However, the size of the rescaled forms is governed by the size of the Jacobian of $f$.  
In order to prove this we need to first show that the Jacobian of $f$ satisfies a reverse H\"older inequality.  In general, the Jacobian of a quasiregular mapping is in $L^1_{\text{loc}}(\bb R^d)$.  Bojarski and Iwaniec \cite{bojarskiiwaniec}, using a method similar to Gehring's lemma \cite{gehring}, showed that if $f\colon \bb R^d \to \bb R^d$, then the Jacobian of $f$ is in $L^{1+\epsilon}_{\text{loc}}(\bb R^d)$ for a sufficiently small $\epsilon$.  In addition, they show that $f$ satisfies a reverse H\"older inequality.  If $f\colon \bb R^d \to M$, then  the Jacobian of $f$ will be in $L^{1+\epsilon}_{\text{loc}}(\bb R^d)$, but it will not necessarily satisfy a reverse H\"older inequality.
The reverse H\"older inequality only holds when $H^l(M) \ne 0$ for some $l$, where $1\le l \le d-1$.

Once we know that the Jacobian of $f$ satisfies a reverse H\"older inequality, we prove that the size of the Jacobian governs the size of the rescaled forms.
In turn, this shows that the integral of the Jacobian of $f$ on a sequence of large balls will be arbitrarily small.
At this point we arrive at a contradiction since the balls were exactly chosen so that the integral of the Jacobian of $f$ is bounded away from $0$.  
Hence the number of forms is bounded by $\binom{d}{l}$.  These forms correspond to the dimension of the $l$-de Rham cohomology on $M$, proving Theorem \ref{mainthm}.

The structure of the paper is as follows.
Section \ref{forms} gives a brief introduction to differential forms on manifolds and pullbacks of differential forms by quasiregular mappings.  We also show the reverse H\"older inequality for the Jacobian of $f$.  For the relationship between quasiregular mappings and differential forms see \cite[Section 3]{bonkheinonen} and \cite{iwaniecmartin}.  The use of differential forms in this setting is inspired by the work of Bonk and Heinonen \cite{bonkheinonen}, Donaldson and Sullivan \cite{donaldsonsullivan} and Iwaniec and Martin \cite{iwaniecmartin}.

In Section \ref{rescale} we  discuss the rescaling argument and prove certain required convergence results.  Section \ref{mainproof} gives the proof of Theorem \ref{mainthm}.  Some of the methods in the proof are influenced by techniques developed by Pankka \cite{pankka}. For a reference on the facts used for quasiregular mappings see \cite{bonkheinonen}, \cite{donaldsonsullivan} and \cite{rickman1993}.

\subsection{Acknowledgments}
The author thanks Mario Bonk for both introducing him to the problem and the many discussions and comments on the paper.  The author would also like to thank Pekka Pankka for conversations in Helsinki on this topic.

\section{Exterior Algebra and Differential Forms}\label{forms}
This section gives an introduction to the tools needed to prove Theorem \ref{mainthm}. 

The space $M$ will always be a closed, connected and orientable Riemannian manifold of dimension $d$. 
Let $\bigwedge^l(\bb R^d)$ denote the space of degree $l$ exterior powers of the cotangent bundle of $\bb R^d$, for $1\le l \le d-1$.
By $\Omega^l(M)$, we mean the space of smooth differential forms on $M$ of degree $l$.  The de Rham cohomology of $M$ will be denoted by $H^l(M)$.  
Let $D\subset \bb R^d$, we say a differential form $\alpha$ is in $L^p(D)$, whenever the component functions of $\alpha$ are in the usual $L^p$-space.
Similarly, $\alpha$ is in the Sobolev space $W^{1,p}(D)$ whenever the component functions are in the standard Sobolev space, i.e.,  $\alpha_i \in L^p(D)$ and $\alpha_i$ has weak derivatives in $L^p(D)$.
On $\Omega^l(M)$, there exists an inner product induced by the Riemmanian metric on $M$.  For $\omega \in \Omega^l(M)$, we denote $\|\omega\|_\infty$ to be the $L^\infty$-norm given by this inner product.
Let $C_c^\infty(D)$ denote the space of smooth functions with compact support in $D$.
The exponents $p$ and $q$ will always denote $d/l$ and $d/(d-l)$ respectively. For $x \in \bb R^d$ and $r >0$, the set $B(x,r) \subset \bb R^d$ denotes the ball of radius $r$, centered at $x$.

In the following we can consider $l$ such that $1 \le l \le d-1$.  This is because $H^d(M)\cong H^0(M) \cong \bb R$ for the manifolds considered in Theorem \ref{mainthm}.

We will use Poincar\'e duality (see \cite[p. 44]{botttu}) to pick differential forms on $M$.
\begin{theorem}\label{poincare}
Let $k = \dim H^l(M)$, then there exists forms $\alpha_1,\dots,\alpha_k \in \Omega^l(M)$ and $\beta_1,\dots,\beta_k \in \Omega^{d-l}(M)$ such that $\{[\alpha_i]\}_{i=1}^k$ forms a basis for $H^l(M)$, $d\alpha_i = 0$, $d\beta_i = 0$ and
\begin{align}\label{orthogonal}
    \int_M \alpha_i\wedge\beta_j = \delta_{ij},
\end{align}
for $1 \le i,j\le k$.
\end{theorem}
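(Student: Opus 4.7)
The plan is to deduce this from Poincaré duality together with the standard dual basis construction; no new analytic input is needed beyond what is stated in Bott--Tu.

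First, I would pick any basis $\{[\alpha_1],\ldots,[\alpha_k]\}$ of the de Rham cohomology $H^l(M)$ and take smooth closed representatives $\alpha_i\in \Omega^l(M)$. This is possible because de Rham cohomology is by definition built from smooth forms modulo exact smooth forms, so every class has a smooth closed representative.

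Next, I would invoke Poincaré duality in the form given in Bott--Tu (p.~44): for a closed, connected, oriented $d$-manifold $M$, the bilinear pairing
\begin{align*}
    \langle\,\cdot\,,\,\cdot\,\rangle\colon H^l(M)\times H^{d-l}(M)\to \bb R,\qquad \langle[\alpha],[\beta]\rangle=\int_M \alpha\wedge \beta,
\end{align*}
is well-defined and non-degenerate. Well-definedness is a short Stokes argument: if $\alpha'=\alpha+d\eta$ and $d\beta=0$, then $\alpha'\wedge\beta-\alpha\wedge\beta=d(\eta\wedge\beta)$, whose integral over the closed manifold $M$ vanishes, and similarly in the other argument. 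Non-degeneracy is the content of Poincaré duality and in particular forces $\dim H^{d-l}(M)=\dim H^l(M)=k$.

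Finally, I would apply the standard dual basis lemma. By non-degeneracy, the linear map $H^{d-l}(M)\to H^l(M)^*$ sending $[\beta]$ to $[\alpha]\mapsto \int_M\alpha\wedge\beta$ is an isomorphism, so there exist unique classes $[\beta_1],\ldots,[\beta_k]\in H^{d-l}(M)$ dual to the chosen basis, i.e.\ satisfying $\int_M\alpha_i\wedge\beta_j=\delta_{ij}$. Choosing smooth closed representatives $\beta_j\in\Omega^{d-l}(M)$ completes the construction. There is no real obstacle: the only substantive ingredient is Poincaré duality itself, which is quoted; the rest is linear algebra and Stokes' theorem.
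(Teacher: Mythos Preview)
Your proposal is correct and matches the paper's own treatment: the paper does not give a proof of this theorem but simply states it as a direct consequence of Poincar\'e duality, citing \cite[p.~44]{botttu}. Your argument spells out precisely the standard dual-basis construction that the citation is meant to convey, so there is nothing to add or correct.
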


We will often want to estimate integrals of certain differential forms. The following inequality will be useful later on.
If $\alpha \in \bigwedge^{l_1}(R^d)$ and $\beta \in \bigwedge^{l_2}(\bb R^d)$, then
\begin{align}\label{wedgeestimate}
    |\alpha \wedge \beta| \le C(d)|\alpha||\beta|,
\end{align}
where $C(d)$ only depends on the dimension.
To prove this note that the product $\alpha \wedge \beta$ is a bilinear operator on two finite dimensional vector spaces when $x$ is fixed.  Therefore it is bounded and we arrive at \eqref{wedgeestimate}.

A key tool we use is the pullback of a differential form by a quasiregular map.
If $f\colon\bb R^d \to M$ is quasiregular and $\omega \in \Omega^l(M)$, then
\begin{align}\label{stillclosed}
    d(f^*\omega) = f^*(d\omega)= 0.
\end{align}
We have that $f^*\omega \in L^{p}_{\text{loc}}(\bb R^d)$.  As a result, $d(f^*\omega)$ must be interpreted in the weak sense.  For a thorough discussion of this, see \cite[Section 2]{donaldsonsullivan}.

The next proposition gives a pointwise bound for these pullbacks.
\begin{prop}\label{pullbackestimate}
If $f \colon \bb R^d \to M$ is quasiregular and $\omega \in \Omega^l(M)$, then, for almost every $x \in \bb R^d$,
\begin{align*}
    |f^*\omega(x)| \le C(d)\|\omega\|_{\infty}\|Df(x)\|^l,
\end{align*}
where $\|Df\|$ is the operator norm for $Df$ and $C(d) > 0$ is a constant that depends only on $d$.
\end{prop}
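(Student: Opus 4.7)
The plan is to reduce the statement to a pointwise linear-algebra fact, since the inequality is an a.e.\ pointwise bound that makes no global use of quasiregularity beyond the existence of a classical differential.

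First I would invoke the standard fact that a quasiregular map $f\in W^{1,d}_{\mathrm{loc}}(\bb R^d,M)$ is classically differentiable at almost every $x\in\bb R^d$; fix such a point. At $x$ the differential $Df(x)$ is a well-defined linear map $\bb R^d\to T_{f(x)}M$ with operator norm $\|Df(x)\|$, and the pullback is given by the defining identity
\[
(f^*\omega)(x)(v_1,\dots,v_l) \;=\; \omega(f(x))\bigl(Df(x)v_1,\dots,Df(x)v_l\bigr)
\]
for tangent vectors $v_1,\dots,v_l\in\bb R^d$.

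Next I would fix an orthonormal basis $e_1,\dots,e_d$ of $\bb R^d$ and expand $f^*\omega(x)$ in the induced dual basis on $\bigwedge^l(\bb R^d)$, so that
\[
|f^*\omega(x)|^2 \;=\; \sum_{i_1<\cdots<i_l}\bigl|(f^*\omega)(x)(e_{i_1},\dots,e_{i_l})\bigr|^2.
\]
For each multi-index $(i_1,\dots,i_l)$ the vectors $Df(x)e_{i_k}$ have length at most $\|Df(x)\|$, so evaluating the multilinear form $\omega(f(x))$ on them gives, by \eqref{wedgeestimate} applied iteratively (or by the same elementary bilinearity argument used to prove \eqref{wedgeestimate}, extended to $l$-linearity),
\[
\bigl|(f^*\omega)(x)(e_{i_1},\dots,e_{i_l})\bigr| \;\le\; C(d)\,|\omega(f(x))|\,\|Df(x)\|^l \;\le\; C(d)\,\|\omega\|_\infty\,\|Df(x)\|^l.
\]

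Summing the $\binom{d}{l}$ squared bounds and taking square roots yields the claimed inequality after absorbing the combinatorial factor $\binom{d}{l}^{1/2}$ into $C(d)$. No step here looks genuinely difficult; the only non-trivial input is the a.e.\ classical differentiability of quasiregular maps, after which the statement is essentially the observation that the $l$-th exterior power of a linear map has operator norm at most the $l$-th power of the operator norm, combined with the sup-norm bound on $\omega$.
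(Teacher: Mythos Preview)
Your argument is correct and is essentially the same as the paper's: both reduce the statement to a pointwise linear-algebra bound at a point of differentiability, with the constant coming from the number of multi-indices. The only cosmetic difference is that the paper passes to local coordinates on $M$, writes $f^*\omega=\sum_I(\omega_I\circ f)\,df^I$, and bounds $|df^I|\le\|Df\|^l$ via Hadamard's inequality, whereas you expand $f^*\omega$ in an orthonormal basis of $\bb R^d$ and bound each evaluation by multilinearity; these are two phrasings of the same estimate.
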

\begin{proof}
The inequality we are trying to prove is a pointwise estimate.  So without loss of generality we may assume that $\omega \in \Omega^l(B(0,1))$.  For almost every $x \in \bb R^d$,
\begin{align*}
    f^*\omega(x) = \sum_{I} (\omega_I\circ f(x)) df^I(x)
\end{align*}
where $I = \{i_1,\dots,i_l\}$ is a multi-index of length $l$.  That is,
\begin{align*}
    df^I = df_{i_1}\wedge\cdots\wedge df_{i_l},
\end{align*}
where $f_i$ is $i$-th component function of $f$ and we sum  over all multi-indices, $1 \le i_1 < \cdots < i_l \le d$.
By Hadamard's inequality,
\begin{align*}
    |df_{i_1}\wedge\cdots\wedge df_{i_l}| &\le  |df_{i_1}|\cdots| df_{i_l}| \\
    &\le \|Df\|^l.
\end{align*}
Thus,
\[
    |f^*\omega(x)| \le C(d)\|\omega\|_{\infty}\|Df(x)\|^l.\qedhere
\]

\end{proof}

Bojarski and Iwaniec \cite{bojarskiiwaniec} showed that a quasiregular map $f \colon \bb R^d \to \bb R^d$ has a Jacobian that satifies a reverse H\"older inequality.  If $F,\Omega \subset \bb R^d$ are sets such that $F$ is compact, $\Omega$ is open and $F\subset \Omega$, then
\begin{align}
    \biggl (\int_{F}J_f^b\biggr )^{1/b} \le C(d,b,K)\frac{1}{\operatorname{dist}(F,\partial \Omega)^{d/a}} \int_{\Omega} J_f
\end{align}
where $\frac{1}{a} + \frac{1}{b} = 1$.  Crucially, $C(d,b,K)$ is independent of $f,F$ and $\Omega$.  They prove this by showing a weaker reverse H\"older inequality, where the exponents are $1$ and $1/2$.  They then use Gehring's lemma to upgrade to the above inequality.
We would like to have such a statement for $f \colon \bb R^d \to M$.  If $H^l(M) = 0$ for $1 \le l\le d-1$, then the Jacobian of $f$ does not necessarily satisfy a reverse H\"older inequality.  In our case there exists an $l$ such that $H^l(M) \ne 0$.
\begin{prop}\label{reverseholder}
Let $M$ be a closed Riemannian manifold and let  $f \colon \bb R^d \to M$ be $K$-quasiregular. If there exists an integer $l$ with $1\le l \le d-1$ such that $H^l(M) \ne 0$, then the Jacobian of $f$ satisfies the weak reverse H\"older inequality,
\begin{align*}
    \frac{1}{|\frac{1}{2}B|}\int_{\frac{1}{2}B} J_f \le  C(d,M,K) \biggl ( \frac{1}{|B|} \int_B J_f^{d/(d+1)} \biggr)^{(d+1)/d},
\end{align*}
where $B \subset \bb R^d$ is an arbitrary ball.
\end{prop}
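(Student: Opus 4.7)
The plan is to adapt the Bojarski--Iwaniec argument \cite{bojarskiiwaniec} to the manifold target, using the assumption $H^l(M)\neq 0$ in place of the triviality of Euclidean cohomology. By Theorem \ref{poincare}, pick closed forms $\alpha\in\Omega^l(M)$ and $\beta\in\Omega^{d-l}(M)$ with $\int_M\alpha\wedge\beta=1$. Since $[\alpha\wedge\beta]=\operatorname{vol}(M)^{-1}[\omega_M]$ in $H^d(M)\cong\bb R$, there is a smooth $(d-1)$-form $\sigma\in\Omega^{d-1}(M)$ with $\omega_M=\operatorname{vol}(M)\,\alpha\wedge\beta+d\sigma$. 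Pulling back by $f$ gives, in the weak sense on $\bb R^d$,
\[J_f\,dx=\operatorname{vol}(M)\,f^*\alpha\wedge f^*\beta+d(f^*\sigma),\]
where $f^*\alpha$ and $f^*\beta$ are closed by \eqref{stillclosed}.

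Fix an arbitrary ball $B\subset\bb R^d$ of radius $r$. Since $f^*\alpha$ is closed, a homotopy operator for differential forms on $B$ (cf.\ \cite{iwaniecmartin}) produces an $(l-1)$-form $\xi$ with $d\xi=f^*\alpha$ and a scale-invariant Sobolev--Poincar\'e bound $\|\xi\|_{L^{s^*}(B)}\le C\|f^*\alpha\|_{L^s(B)}$, where $s=d^2/(l(d+1))$ and $1/s^*=1/s-1/d$. Fix a cutoff $\phi\in C_c^\infty(B)$ with $\phi\equiv 1$ on $\frac{1}{2}B$ and $\|d\phi\|_\infty\le 4/r$. Using that $f^*\alpha=d\xi$ and that $f^*\beta$ is closed, Stokes' theorem yields
\[\int_{\frac{1}{2}B}J_f\le\int_B\phi J_f=-\operatorname{vol}(M)\int_B d\phi\wedge\xi\wedge f^*\beta-\int_B d\phi\wedge f^*\sigma.\]

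The first term is bounded by H\"older's inequality combined with Proposition \ref{pullbackestimate} and the distortion bound $\|Df\|^d\le KJ_f$. The choice of $s$ ensures $ls/d=(d-l)(s^*)'/d=d/(d+1)$, so $|f^*\alpha|^s$ and $|f^*\beta|^{(s^*)'}$ are each pointwise bounded by a constant times $K^{d/(d+1)}J_f^{d/(d+1)}$; the arithmetic identity $1/s+1/(s^*)'=(d+1)/d$ then gives
\[\Bigl|\int_B d\phi\wedge\xi\wedge f^*\beta\Bigr|\le\|d\phi\|_\infty\,\|\xi\|_{L^{s^*}(B)}\,\|f^*\beta\|_{L^{(s^*)'}(B)}\le\frac{C(d,K)}{r}\Bigl(\int_B J_f^{d/(d+1)}\Bigr)^{(d+1)/d}.\]

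The main obstacle is the second term. Since $\int_B d\phi\wedge\tau=0$ for any closed $(d-1)$-form $\tau$ on $B$ by Stokes on compactly supported forms, $f^*\sigma$ may be replaced by any other primitive of $d(f^*\sigma)$ without changing the integral against $d\phi$. We select such a primitive via the homotopy operator, observing that $d(f^*\sigma)=J_f\,dx-\operatorname{vol}(M)f^*\alpha\wedge f^*\beta$ is pointwise controlled by $C(M,K)J_f$. An endpoint Sobolev-type estimate at the sub-unit exponent $t=d/(d+1)$---available here because the right-hand side has Hardy-space-type integrability inherited from $J_f=f^*\omega_M$ being a wedge product of pulled-back closed forms---gives the matching bound
\[\Bigl|\int_B d\phi\wedge f^*\sigma\Bigr|\le\frac{C(d,M,K)}{r}\Bigl(\int_B J_f^{d/(d+1)}\Bigr)^{(d+1)/d}.\]
Adding the two estimates, dividing by $|\tfrac{1}{2}B|$, and using $|B|^{1/d}\sim r$ to rearrange yields the proposition. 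The delicate point is the endpoint Sobolev step for the $\sigma$-term: the naive pointwise estimate $|f^*\sigma|\le C\|Df\|^{d-1}\le CK^{(d-1)/d}J_f^{(d-1)/d}$ yields only an $L^{(d-1)/d}$-type bound on $J_f$, insufficient for the target exponent $d/(d+1)$, and the gap must be bridged by exploiting the special compensated-compactness structure of $d(f^*\sigma)$.
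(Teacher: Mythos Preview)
Your treatment of the $\alpha\wedge\beta$ term is correct and matches the paper's argument for the corresponding piece. The gap is exactly where you flag it: the $\sigma$-term. The Poincar\'e--Sobolev inequality for forms in \cite{iwanieclutoborski} that you invoke for the first term requires the exponent to be at least $1$; the exponent $s=d^2/(l(d+1))$ satisfies $s>1$ precisely because $1\le l\le d-1$, and this is what makes the $\xi$-estimate go through. For the $\sigma$-term you would need the analogous inequality at exponent $t=d/(d+1)<1$, which is the false endpoint of Gagliardo--Nirenberg--Sobolev. Your appeal to ``Hardy-space-type integrability'' of $J_f$ does not close the gap: even granting $J_f\in\mathcal H^1_{\mathrm{loc}}$ via Coifman--Lions--Meyer--Semmes, there is no reason the local $\mathcal H^1$-content of $J_f$ on $B$ is controlled by $\bigl(\int_B J_f^{d/(d+1)}\bigr)^{(d+1)/d}$, and in any case invoking higher integrability of $J_f$ here is circular, since the reverse H\"older inequality you are trying to prove is the input to Gehring's lemma.

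The paper avoids the $\sigma$-term entirely by a different decomposition of the volume form. Rather than writing $\omega_M=\operatorname{vol}(M)\,\alpha\wedge\beta+d\sigma$, it uses the Isotopy Lemma to produce finitely many orientation-preserving diffeomorphisms $\Phi_\nu\colon M\to M$ and a partition of unity $\{\lambda_\nu\}$ so that $\omega:=\sum_\nu\lambda_\nu\,\Phi_\nu^*(\alpha\wedge\beta)$ is a strictly positive $d$-form on $M$, hence $\omega=cV$ with $c>0$ smooth. Pulling back, $J_f$ is a finite sum of terms of the form $(c\circ f)^{-1}(\lambda_\nu\circ f)\,f^*\alpha_\nu\wedge f^*\beta_\nu$ with $\alpha_\nu=\Phi_\nu^*\alpha$ and $\beta_\nu=\Phi_\nu^*\beta$ both \emph{closed}. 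Each summand is then handled by the same integration-by-parts and Poincar\'e--Sobolev argument you used for the first term, with admissible exponents $s=d^2/(l(d+1))>1$ and its conjugate, and there is no exact remainder to estimate. The key idea you are missing is this geometric trick of manufacturing a pointwise-positive representative of the volume class out of translates of $\alpha\wedge\beta$, which makes the sub-unit Sobolev exponent disappear from the problem.
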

\begin{proof}
Since $H^l(M) \ne 0$ there exists a Poincar\'e pair, $\alpha$ and $\beta$, given in Theorem \ref{poincare} with
\begin{align*}
    \int_M\alpha\wedge\beta = 1.
\end{align*}
This implies that there exists a point $a\in M$ so that for every chart $U$ around $a$,
\begin{align*}
    \alpha\wedge\beta|_x = g(x)dx^1\wedge\cdots\wedge dx^d,
\end{align*}
where $g(x) > 0$ and $x \in U$.
Let $x \in M$, by the Isotopy lemma \cite[p. 142]{guilleminpollack} there exists an orientation preserving diffeomorphism $\Phi_{x} \colon M \to M$ such that $\Phi_{x}(a) = x$.
Let $U$ be an open neighborhood around $a$ such that $\alpha\wedge\beta$ is positive in the sense above.  Then $( \Phi_x(U))_{x\in M}$ is an open cover of $M$ and there exists a finite subcover, $U_1,\dots,U_m$.
Since $\Phi_{x}$ is orientation preserving, $\Phi_x^*(\alpha\wedge \beta)$ is positive on $U_x$.  Let $\Phi_\nu$ be the diffeomorphism corresponding to $U_\nu$ and let $\{\lambda_\nu\}$ be a partition of unity subordinate to $\{U_\nu\}$.  Define
\begin{align*}
    \omega := \sum_{\nu = 1}^m \lambda_\nu \Phi_\nu^*(\alpha \wedge \beta).
\end{align*}
From this definition we get that for each chart on M,
\begin{align*}
    \omega|_x = h(x)dx^1\wedge \dots \wedge dx^d,
\end{align*}
where
\begin{align*}
    h(x) = \sum_{\nu =1}^m \lambda_\nu(x) g(\Phi_\nu(x)) J_{\Phi_\nu}(x).
\end{align*}
The diffeomorphism $\Phi_\nu$ is orientation preserving, so $J_{\Phi_\nu}(x) > 0$.  The functions $\lambda_\nu$ are always positive and only nonzero on $U_\nu$.  On the set $U_\nu$, $g(\Phi_\nu(x))$ is also positive.  So $h(x) > 0$.

The $d$-form $\omega$ is nonzero and so must be comparable to the volume form on $M$.  That is,
\begin{align*}
    \omega = cV,
\end{align*}
where $c \colon M \to \bb (0,\infty)$ is a positive, smooth function on $M$ and $V$ is the volume form on $M$.

With this preliminary representation of $V$ we can now proceed in showing the proposition.
Let $\psi \in C_c^\infty(\bb R^d)$ be a bump function that is $1$ on $\frac{1}{2}B$ and $0$ outside $B$.
The pullback of $V$ by $f$ is the Jacobian of $f$.  So
\begin{align*}
    \int_{\frac{1}{2}B} J_f & \le \int_B \psi J_f\\
    & = \int_{B} \psi f^*V \\
    & = \sum_{\nu = 1}^m\int_B\psi \frac{1}{c \circ f} (\lambda_\nu \circ f) f^*(\alpha_\nu \wedge \beta_\nu),
\end{align*}
where $\alpha_\nu = \Phi_\nu^*\alpha$ and $\beta_\nu = \Phi_\nu^*\beta$.
Since $m$ depends only on $M$ it suffices to bound a single term in the sum.  We also know that $1/c$ and $\lambda_\nu$ are positive and bounded above by constants depending only on $M$.  So it suffices to consider the integral,
\begin{align*}
    \int_B \psi f^*\alpha_\nu \wedge f^*\beta_\nu.
\end{align*}
On $M$, $\alpha_\nu$ is closed.  By \eqref{stillclosed}, $f^*\alpha = du$ on $B$.  Integration by parts gives that
\begin{align*}
     \biggl| \int_B \psi f^*\alpha_\nu \wedge f^*\beta_\nu \biggr| = \biggl |\int_B d\psi \wedge u\wedge f^*\beta_\nu \biggr|.
\end{align*}
By \eqref{wedgeestimate}, H\"older's inequality and because $|d\psi| \le \frac{1}{r}$, where $r$ is the radius of $B$,
\begin{align*}
    \biggl |\int_B d\psi \wedge u\wedge f^*\beta_\nu \biggr| \le \frac{C(d)}{r}\|u\|_{d^2/(l(d+1)-d)}\|f^*\beta\|_{d^2/((d+1)(d-l))}.
\end{align*}
Note that these exponents add up correctly in this inequality because $1\le l \le d-1$.  We can choose $u$ so that $u$ satisfies a Poincar\'e-Sobolev inequality.  For a precise formulation of this, see \cite[Corollary 4.2]{iwanieclutoborski}.  Since $du = f^*\alpha_\nu$,
\begin{align*}
    \frac{C(d)}{r}\|u\|_{d^2/(l(d+1)-d)}\|f^*\beta\|_{d^2/((d+1)(d-l))} \le \frac{C(d)}{r}\|f^*\alpha_\nu\|_{d^2/(l(d+1))}\|f^*\beta\|_{d^2/((d+1)(d-l))}.
\end{align*}
Again, we remark that the Poincar\'e-Sobolev inequality is only valid here because $1\le l \le d-1$.
The forms $\alpha_\nu$ and $\beta_\nu$ are smooth on $M$ and therefore are bounded independently of $f$.  So by \eqref{wedgeestimate},
\begin{align*}
    \frac{C(d)}{r}\|f^*\alpha_\nu\|_{d^2/(l(d+1))}\|f^*\beta\|_{d^2/((d+1)(d-l))} &\le \frac{C(d,M,K)}{r}\|J_f\|_{d/(d+1)}^{l/d}\|J_f\|_{d/(d+1)}^{(d-l)/d} \\
    & = \frac{C(d,M,K)}{r}\biggl (\int_B J_f^{d/(d+1)}\biggr )^{(d+1)/d}.
\end{align*}
We sum over $\nu$ and take averages to arrive at the proposition.
\end{proof}
Now that we have shown Proposition \ref{reverseholder}, \cite[Theorem 4.2]{bojarskiiwaniec} implies the following statement:
\begin{prop}\label{improvedholder}
Let $B \subset \bb R^d$ be a ball.  There exists $b > 1$ such that
\begin{align*}
\biggl (\frac{1}{|\frac{1}{2}B|}\int_{\frac{1}{2}B} J_f^b\biggr)^{1/b} \le C(d,M,K,b) \frac{1}{|B|}\int_{B}J_f.
\end{align*}
\end{prop}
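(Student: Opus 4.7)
The plan is to observe that Proposition \ref{reverseholder} already places the Jacobian $J_f$ into the precise setting where Gehring's self-improvement principle applies, and then to invoke the version of that principle recorded by Bojarski and Iwaniec. Writing $s = d/(d+1) < 1$, the conclusion of Proposition \ref{reverseholder} reads
\begin{align*}
\frac{1}{|\tfrac{1}{2}B|}\int_{\tfrac{1}{2}B} J_f \le C(d,M,K) \left(\frac{1}{|B|}\int_B J_f^s\right)^{1/s}
\end{align*}
for every ball $B \subset \bb R^d$. This is exactly the form of a weak reverse H\"older inequality on a scale of balls, with a below-one exponent on the right-hand side.

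The classical content of Gehring's lemma, in the precise form isolated in \cite[Theorem 4.2]{bojarskiiwaniec}, is that any nonnegative locally integrable function $h$ satisfying such an inequality on every ball with a uniform constant must be higher integrable: there exist $b > 1$ and $C'$, depending only on $d$, $s$, and $C$, such that
\begin{align*}
\left(\frac{1}{|\tfrac{1}{2}B|}\int_{\tfrac{1}{2}B} h^b\right)^{1/b} \le C' \frac{1}{|B|}\int_B h.
\end{align*}
Substituting $h = J_f$ and using the constant produced in Proposition \ref{reverseholder} gives the desired inequality with an exponent $b > 1$ and a constant depending only on $d$, $M$, $K$, and $b$.

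There is, in effect, no obstacle to overcome at this stage. The substantive work, which exploited the nontriviality of $H^l(M)$ to build via Poincar\'e duality a positive multiple of the volume form and then applied integration by parts together with the Poincar\'e-Sobolev inequality for forms, has already been carried out in the proof of Proposition \ref{reverseholder}. What remains is a direct appeal to a self-improvement theorem whose proof (a Calder\'on-Zygmund stopping-time decomposition combined with a distribution-function estimate) is standard and is black-boxed here. This is why the proposition can reasonably be stated rather than proved in detail: the only new ingredient relative to \cite{bojarskiiwaniec} is the cohomological hypothesis that was needed to obtain the weak reverse H\"older inequality in the first place.
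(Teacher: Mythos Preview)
Your proposal is correct and matches the paper's approach exactly: the paper does not give an independent proof of this proposition but simply states that Proposition~\ref{reverseholder} together with \cite[Theorem~4.2]{bojarskiiwaniec} (the Gehring-type self-improvement) yields the conclusion. Your write-up spells out precisely this deduction, so there is nothing to add or correct.
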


\section{Rescaling Principle}\label{rescale}
In this section we construct rescaled forms on $B(0,1)$.
By Theorem \ref{poincare}, there exist closed differential forms $\alpha_1,\dots,\alpha_k \in \Omega^{l}(M)$ and $\beta_1,\dots,\beta_k \in \Omega^{d-l}(M)$ such that the cohomology classes $[\alpha_1],\dots,[\alpha_k]$ form a basis for $H^l(M)$.
In addition, they satisfy the orthogonality relation
\begin{align*}
    \int_M \alpha_i\wedge\beta_j = \delta_{ij},
\end{align*}
for $1 \le i,j \le k$.
We will rescale the pullbacks, $\eta_i = f^*\alpha_i$ and $\theta_i = f^*\beta_i$.
By \eqref{stillclosed}, $\eta_i$ and $\theta_i$ are closed.  By the quasiregularity of $f$, we have that $f \in W_{\text{loc}}^{1,d}(\bb R^d,M)$.  By Proposition \ref{pullbackestimate}, $\eta_i \in L^p_{\text{loc}}(\bb R^d)$ and $\theta_i \in L^q_{\text{loc}}(\bb R^d)$, where $p= d/l$ and $q = d/(d-l)$.
For $n \in \bb N$, let $\{B_n\}$ be a collection of balls in $\bb R^d$ that will be chosen below.  Define $T_n \colon B(0,1) \to B_n := B(a_n,r_n)$ as $T_n (x) : = a_n + r_nx$.  Next, we construct our rescaled forms as
\begin{align}\label{etan}
    \eta_i^n := \frac{1}{A(B_n)^{1/p}} T_n^*\eta_i
\end{align}
and
\begin{align}\label{thetan}
    \theta_i^n := \frac{1}{A(B_n)^{1/q}} T_n^*\theta_i,
\end{align}
where
\begin{align*}
    A(B) := \int_B J_f,
\end{align*}
for a Borel set $B \subset \bb R^d$.  Explicitly, if
\begin{align*}
    \eta_i = \sum_{I} h_I(x) dx^I,
\end{align*}
where the summation is over all $I = \{i_1,\dots,i_l\}$, multi-indices of length $l$ and where $dx^I = dx^{i_1}\wedge \cdots\wedge dx^{i_l}$.  We have that
\begin{align*}
    \eta_i^n = \frac{r_n^{d/p}}{A(B_n)^{1/p}} \sum_{I} h_I(a_n + r_nx) dx^I.
\end{align*}
Similarly,
\begin{align*}
    \theta_i^n = \frac{r_n^{d/q}}{A(B_n)^{1/q}} \sum_{J} g_J(a_n + r_nx) dx^J,
\end{align*}
where $J$ is a multi-index of length $(d-l)$.

The following theorem \cite[Theorem 1.11]{bonkheinonen} shows that $A(B(0,r))$ is unbounded.
\begin{theorem}\label{aunbounded}
Let $f \colon \bb R^d \to M$ be a quasiregular mapping.  If $H^l(M) \ne \{0\}$ for $1\le l < d$, then there exists a constant $\alpha >0$ such that
\begin{align*}
    \liminf_{r\to \infty} \frac{A(B(0,r))}{r^\alpha} > 0.
\end{align*}
In particular, $A(\bb R^d) = \infty$.
\end{theorem}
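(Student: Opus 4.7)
The plan is to deduce Theorem \ref{aunbounded} directly from the improved reverse H\"older inequality (Proposition \ref{improvedholder}), combined with the nontriviality of $J_f$ that follows from $f$ being nonconstant. The key observation is that Proposition \ref{improvedholder} can be rearranged into a lower bound on $A(B(0,r))$ in terms of $\int_{B(0,r/2)} J_f^b$, and the latter quantity is bounded below by a positive constant for all sufficiently large $r$.

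First I would apply Proposition \ref{improvedholder} with $B = B(0, r)$, raise the resulting inequality to the $b$-th power, and multiply both sides by $|B(0, r/2)|$. Since $|B(0,r/2)|$ and $|B(0,r)|$ are both comparable to $r^d$, this gives
\begin{align*}
    \int_{B(0, r/2)} J_f^b \le c(d,M,K,b)\, r^{d(1-b)}\, A(B(0, r))^b,
\end{align*}
which rearranges into
\begin{align*}
    A(B(0, r))^b \ge c'(d,M,K,b)\, r^{d(b-1)} \int_{B(0, r/2)} J_f^b.
\end{align*}

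Next, since $f\colon \bb R^d \to M$ is a nonconstant quasiregular mapping, Reshetnyak's theorem implies that $f$ is discrete and open and that $J_f > 0$ on a set of positive Lebesgue measure. Hence there exists $r_0 > 0$ with $\kappa := \int_{B(0,r_0)} J_f^b > 0$. For all $r \ge 2r_0$ we then have $\int_{B(0,r/2)} J_f^b \ge \kappa$, so
\begin{align*}
    A(B(0,r)) \ge (c'\kappa)^{1/b}\, r^{d(b-1)/b}.
\end{align*}
Setting $\alpha := d(b-1)/b > 0$ we obtain $\liminf_{r\to\infty} A(B(0,r))/r^\alpha \ge (c'\kappa)^{1/b} > 0$, and $A(\bb R^d) = \infty$ is then immediate by monotonicity.

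The only genuine subtlety is that Proposition \ref{improvedholder} rests on Proposition \ref{reverseholder}, which requires $H^l(M) \ne 0$ for some $1 \le l \le d-1$; this is precisely the hypothesis of Theorem \ref{aunbounded}, so no circularity arises. All of the ``hard work'' is therefore hidden inside the proof of Proposition \ref{reverseholder}, where cohomological information was converted into a reverse H\"older bound; what remains here is simply a bookkeeping argument combining that bound with the fact that $J_f \not\equiv 0$.
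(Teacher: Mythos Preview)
Your argument is correct, and it is worth noting that the paper does not supply its own proof of this statement: Theorem \ref{aunbounded} is quoted verbatim from Bonk--Heinonen \cite[Theorem 1.11]{bonkheinonen} and used as a black box. What you have written is therefore not a reconstruction of the paper's proof but a genuinely new derivation, extracted from the paper's own Proposition \ref{improvedholder}.

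The comparison is interesting. Bonk and Heinonen's original argument predates the reverse H\"older inequality of Proposition \ref{reverseholder}; their growth bound is obtained by different means (a Caccioppoli-type estimate for the conformal energy of pullbacks of harmonic forms combined with a value-distribution argument). Your route instead observes that once the weak reverse H\"older inequality is in hand---which is new to this paper and already encodes the cohomological hypothesis $H^l(M)\ne 0$---the polynomial lower bound on $A(B(0,r))$ drops out by a one-line rearrangement together with the fact that $J_f\not\equiv 0$ (for nonconstant $f$, by Reshetnyak). This makes the present paper internally self-contained at this point and shows, as a pleasant side effect, that Theorem \ref{aunbounded} is actually a formal consequence of Proposition \ref{reverseholder}. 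There is no circularity: Propositions \ref{reverseholder} and \ref{improvedholder} are proved in Section \ref{forms} without any appeal to Theorem \ref{aunbounded}. The one tacit assumption you invoke---that $f$ is nonconstant---is not written in the theorem statement as quoted, but it is clearly the standing hypothesis throughout the paper and is necessary for the conclusion to hold.
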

We also record a lemma due to Rickman (for the proof see \cite[Lemma 5.1]{rickman1980}),
\begin{lemma}[Rickman's Hunting Lemma]\label{hunting}
Let $\mu$ be a Borel measure on $\bb R^d$ that is absolutely continuous with respect to Lebesgue measure. If $\mu(\bb R^d) = \infty$, then, for all $M > 0$, there exists a point $a \in \bb R^d$ and a radius $r > 0$ such that
\begin{align*}
    \mu(B(a,r)) \ge M \quad \text{and} \quad \mu(B(a,r)) \le D(d) \mu(B(a,r/2)),
\end{align*}
where $D(d)$ is a constant that depends only on the dimension.
\end{lemma}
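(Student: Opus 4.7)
The plan is to argue by contradiction via an iterative covering argument. Suppose that for some $M > 0$ no such ball exists, so every ball $B$ with $\mu(B) \ge M$ satisfies $\mu(B/2) < \mu(B)/D$, where $D = D(d)$ will be fixed as a dimensional constant depending on the covering number $N(d)$ of a ball in $\bb R^d$ by half-radius balls centered inside it.

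Since $\mu(\bb R^d) = \infty$, I would first pick $B_0 = B(a_0, r_0)$ with $\mu(B_0)$ arbitrarily large. The iterative step: given $B_j = B(a_j, r_0/2^j)$ with $\mu(B_j) \ge M$, cover $B_j$ by $N(d)$ balls of radius $r_0/2^{j+1}$ with centers in $B_j$, one of which is the concentric half-ball $B(a_j, r_0/2^{j+1})$. By the assumed failure of the conclusion, this concentric ball carries mass strictly below $\mu(B_j)/D$, so the remaining $N(d)-1$ covering balls together carry mass exceeding $(1-1/D)\mu(B_j)$. Pigeonhole then yields some $B_{j+1}$ with $a_{j+1} \in B_j$ and $\mu(B_{j+1}) \ge \lambda\, \mu(B_j)$, where $\lambda := (1-1/D)/(N(d)-1) \in (0,1)$. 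Choosing $\mu(B_0) \ge M\lambda^{-k}$ ensures $\mu(B_j) \ge M$ throughout $j = 0, 1, \dots, k$.

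Next, I would extract the contradiction from the geometry of the chain. The centers drift by at most $r_0/2^j$ at step $j$, so $|a_j - a_0| \le 2 r_0$ and every $B_j$ lies in $B(a_0, 3r_0)$, while the radii $r_0/2^j \to 0$ geometrically. Passing to a convergent subsequence $a_{j_\ell} \to a^*$, the balls $B_{j_\ell}$ eventually lie in $B(a^*, \varepsilon)$ for any prescribed $\varepsilon > 0$. Since $\mu$ is absolutely continuous with respect to Lebesgue measure, $\mu(\{a^*\}) = 0$ and hence $\mu(B(a^*, \varepsilon)) \to 0$ as $\varepsilon \to 0$. This contradicts $\mu(B_{j_\ell}) \ge M$ once $k$ is taken large enough.

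The main obstacle is the calibration: the initial ball must be chosen with enough mass to sustain a chain of length $k$ that is itself large enough to push $r_0/2^k$ below the scale at which $\mu(B(a^*,\cdot))$ drops beneath $M$. Balancing these constraints uses the full strength of $\mu(\bb R^d)=\infty$ together with the local regularity provided by absolute continuity, and forces $D$ to be chosen appropriately large relative to $N(d)$ while still depending only on the dimension.
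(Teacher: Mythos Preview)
The paper does not actually give a proof of this lemma; it simply records the statement and cites Rickman \cite[Lemma~5.1]{rickman1980}. So there is no ``paper's proof'' to compare against, and your attempt has to stand on its own.

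Unfortunately, the plan has a genuine gap at exactly the point you flag as the ``main obstacle.'' Your iteration produces, for each chain length $k$, a ball $B_k$ of radius $r_0/2^k$ with $\mu(B_k)\ge M$, but only after choosing $\mu(B_0)\ge M\lambda^{-k}$. Since $\lambda=(1-1/D)/(N(d)-1)<1$ for \emph{every} choice of $D$ (because $1-1/D<1\le N(d)-1$), the required initial mass $M\lambda^{-k}$ blows up with $k$, and with it the initial radius $r_0=r_0(k)$. There is no reason for $r_0(k)/2^k$ to tend to zero: for instance, if $\mu(B(0,r))$ grows like $r^{\alpha}$ with $\alpha$ small, then $r_0(k)\gtrsim \lambda^{-k/\alpha}$ and $r_0(k)/2^k\to\infty$. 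Moreover, the putative limit point $a^*$ depends on $k$ (and on the entire chain), so you cannot fix in advance a scale below which $\mu(B(a^*,\varepsilon))<M$. The ``balancing'' you describe---making $D$ large relative to $N(d)$---does not help, since it only pushes $\lambda$ toward $1/(N(d)-1)$, never to $1$ or beyond.

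In short, the covering--pigeonhole iteration is the right instinct, but the argument as written does not close. A correct proof needs a mechanism that prevents the mass from decaying along the chain (so the iteration can run indefinitely from a \emph{fixed} starting ball), or an altogether different stopping-time argument; your outline does not supply either. Since this lemma is quoted from Rickman and not reproved in the paper, you should consult \cite{rickman1980} for the original argument.
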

So by Theorem \ref{aunbounded} and Lemma \ref{hunting}, there exist balls $B_n \subset \bb R^d$ such that $\displaystyle\lim_{n\to \infty} A(B_n) = \infty$ and
\begin{align}\label{doubling}
    A(B_n) \le D(d)A (\tfrac{1}{2}B_n).
\end{align}
We use these balls in our definition of $\eta_i^n$ and $\theta_i^n$.

\begin{lemma}\label{conv1}
For $n\in \bb N$, there exists a $(d-l-1)$-form $u_i^n \in W^{1,q}(B(0,1))$, where $q= d/(d-l)$, such that
\begin{align*}
    du_i^n = \theta_i^n.
\end{align*}
Furthermore, we can pass to a subsequence so that the following convergence results hold.
\begin{enumerate}
    \item[\textnormal{(i)}] There exists an $l$-form $\tilde \eta_i \in  L^p(B(0,1))$ and a $(d-l)$-form $\tilde \theta_i \in L^q(B(0,1))$ such that
    \begin{align*}
        \lim_{n\to\infty} \eta_i^n = \tilde \eta_i \quad \text{and} \quad \lim_{n\to \infty} \theta_i^n = \tilde \theta_i
    \end{align*}
    where the convergence of $\eta_i^n$ is in the weak topology on $L^p(B(0,1))$ and the convergence of $\theta_i^n$ is in the weak topology on $L^q(B(0,1))$.
    \smallskip
    \item [\textnormal{(ii)}] There exists a $(d-l-1)$-form, $\tilde u_i \in W^{1,q}(B(0,1))$ such that
    \begin{align*}
        \lim_{n\to \infty} u_i^n = \tilde u_i
    \end{align*}
    in $L^q(B(0,1))$.
    \smallskip
    \item [\textnormal{(iii)}] On $B(0,1)$
    \begin{align*}
        d\tilde u_i = \tilde \theta_i
    \end{align*}
    in the weak sense.
\end{enumerate}
\end{lemma}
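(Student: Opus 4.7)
The plan is to establish uniform $L^p$ and $L^q$ bounds on the rescaled forms, pass to weakly convergent subsequences by reflexivity, construct primitives of $\theta_i^n$ via a bounded homotopy operator on the ball, and then transfer the differential relation to the limit using strong/weak convergence.

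First, I would exploit the scaling of $T_n$. Because $T_n(x) = a_n + r_n x$, one has $|T_n^*\eta_i(x)| = r_n^l|\eta_i(T_n x)|$ and $|T_n^*\theta_i(x)| = r_n^{d-l}|\theta_i(T_n x)|$. Using $lp = d = (d-l)q$ together with the change of variables $y = T_n(x)$ in definitions \eqref{etan} and \eqref{thetan} yields
\begin{align*}
\int_{B(0,1)}|\eta_i^n|^p\, dx = \frac{1}{A(B_n)}\int_{B_n}|\eta_i|^p\, dy, \qquad \int_{B(0,1)}|\theta_i^n|^q\, dx = \frac{1}{A(B_n)}\int_{B_n}|\theta_i|^q\, dy.
\end{align*}
Proposition \ref{pullbackestimate} and the quasiregularity bound $\|Df\|^d \le K J_f$ dominate each integrand on the right pointwise by $C(d,M,K)J_f$, so both sequences are uniformly bounded in their respective $L^p$ and $L^q$ spaces. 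Since $1 < p, q < \infty$, reflexivity and the Banach-Alaoglu theorem provide a subsequence along which $\eta_i^n \rightharpoonup \tilde\eta_i$ weakly in $L^p(B(0,1))$ and $\theta_i^n \rightharpoonup \tilde\theta_i$ weakly in $L^q(B(0,1))$, establishing (i).

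For (ii), I note that each $\theta_i^n$ is weakly closed by \eqref{stillclosed} and apply the homotopy operator for the star-shaped ball from \cite[Corollary 4.2]{iwanieclutoborski} (the same operator used in the proof of Proposition \ref{reverseholder}) to obtain a $(d-l-1)$-form $u_i^n \in W^{1,q}(B(0,1))$ satisfying $du_i^n = \theta_i^n$ together with the Poincar\'e-Sobolev bound $\|u_i^n\|_{W^{1,q}} \le C(d)\|\theta_i^n\|_{L^q}$. The family $\{u_i^n\}$ is therefore uniformly bounded in $W^{1,q}(B(0,1))$, and the Rellich-Kondrachov compactness theorem yields a further subsequence converging strongly in $L^q(B(0,1))$ to some $\tilde u_i$; lower semicontinuity of the $W^{1,q}$-norm under weak convergence of distributional derivatives places $\tilde u_i \in W^{1,q}(B(0,1))$.

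For (iii), I would test $du_i^n = \theta_i^n$ against an arbitrary smooth compactly supported $l$-form $\varphi$ on $B(0,1)$. Integration by parts gives $\int u_i^n \wedge d\varphi = \pm \int \theta_i^n \wedge \varphi$, and as $n \to \infty$ the strong $L^q$ convergence of $u_i^n$ against the bounded $d\varphi$ combines with the weak $L^q$ convergence of $\theta_i^n$ against $\varphi$ to produce $\int \tilde u_i \wedge d\varphi = \pm\int \tilde\theta_i \wedge \varphi$, i.e., $d\tilde u_i = \tilde\theta_i$ in the weak sense. The only genuine technical subtlety is selecting a primitive operator that delivers full $W^{1,q}$ control (needed for Rellich-Kondrachov) rather than only an $L^q$-bound on the primitive, and this is precisely what the Iwaniec-Lutoborski homotopy operator on the ball supplies.
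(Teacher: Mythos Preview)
Your proposal is correct and follows essentially the same route as the paper: both arguments establish uniform $L^p$/$L^q$ bounds via the change of variables and Proposition~\ref{pullbackestimate}, invoke Banach--Alaoglu for (i), use the Iwaniec--Lutoborski homotopy operator \cite[Corollary~4.2]{iwanieclutoborski} to produce primitives with Sobolev control and then compact embedding for (ii), and pass to the limit in the integration-by-parts identity for (iii). Your write-up is in fact slightly more explicit than the paper's in naming Rellich--Kondrachov and in stressing that the full $W^{1,q}$ bound (not merely an $L^q$ bound on $u_i^n$) is what drives the compactness, which is a useful clarification.
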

\begin{proof}
In the following proof we will often pass to subsequences.  It is understood that the subsequences should be taken simultaneously for all the forms mentioned in the lemma.

For the proof of (i), we compute the $L^p$-norm of $\eta_i^n$.  Indeed, by Equation \eqref{etan},
\begin{align*}
    \int_{B(0,1)}|\eta_i^n|^p &= \frac{r_n^d}{A(B_n)}\int_{B(0,1)} |\eta_i(a_n+r_nx)|^p \\
    & = \frac{1}{A(B_n)} \int_{B_n} |\eta_i|^p.
\end{align*}
By the quasiregularity of $f$ and Proposition \ref{pullbackestimate},
\begin{align*}
    \frac{1}{A(B_n)} \int_{B_n} |\eta_i|^p \le KC(d)\frac{\|\alpha_i\|^p_{\infty}}{A(B_n)}\int_{B_n}J_f \\
    &\le KC(d)\|\alpha_i\|^p_{\infty}.
\end{align*}
Hence, the $L^p$-norm of the $\eta_i^n$ is uniformly bounded.  By the Banach-Alaoglu theorem, we can pass to a subsequence so that 
\begin{align*}
    \lim_{n \to \infty} \eta_i^n = \tilde \eta_i,
\end{align*}
weakly in $L^p(B(0,1))$.

The proof for $\theta_i^n$ is very similar.  By $\eqref{thetan}$,
\begin{align*}
    \int_{B(0,1)}|\theta_i^n|^q &= \frac{r_n^d}{A(B_n)}\int_{B(0,1)} |\theta_i(a_n+r_nx)|^q \\
    & = \frac{1}{A(B_n)} \int_{B_n} |\theta_i|^q \\
    & \le KC(d)\frac{\|\beta_i\|^q_{\infty}}{A(B_n)}\int_{B_n}J_f\\
    &\le KC(d)\|\beta_i\|^q_{\infty}.
\end{align*}
Again, by the Banach-Alaoglu theorem, we can pass to a subsequence so that
\begin{align*}
    \lim_{n \to \infty} \theta_i^n = \tilde \theta_i
\end{align*}
weakly in $L^q(B(0,1))$.

We next prove (ii). By part (i), the $L^q$-norm of $\theta_i^n$ is uniformly bounded.
The forms $\theta_i^n$ are closed by \eqref{stillclosed}.
By the Sobolev embedding theorem, there exists $(d-l-1)$-forms, $u_i^n \in W^{1,q}(B(0,1))$ such that $du_i^n = \theta_i^n$ and $\|u_i^n\|_{d/(d-l-1)} \le C \|\theta_i^n\|_q$, where $C$ does not depend on $n,u_i^n$ or $\theta_i^n$ (see \cite[Corollary 4.2]{iwanieclutoborski}, for the formulation of the Sobolev embedding theorem and the Sobolev-Poincar\'e inequaliy for differential forms).
Furthermore, there exists a subsequence of $u_i^n$ that converges to $\tilde u_i$ strongly in $L^q(B(0,1))$.  We will also denote this subsequence as $u_i^n$.

Finally, we show (iii).
We demonstrate that $d\tilde u_i = \tilde\theta_i$ in the weak sense.
By duality, we can consider test forms $\phi \in \Omega^{l+1}(B(0,1))$ with compact support.  We pair $\tilde u_i$ with $d\phi$,
\begin{align*}
    \int_{\bb R^d} \tilde u_i \wedge d\phi &= \lim_{n\to \infty}\int_{\bb R^d} u_i^n\wedge d\phi \\
    & = \lim_{n\to \infty}(-1)^{d-l}\int_{\bb R^d} \theta_i^n\wedge \phi\\
    & = (-1)^{d-l}\int_{\bb R^d} \tilde \theta_i \wedge \phi.
\end{align*}
This proves the claims in the lemma.
\end{proof}

We need one more convergence result.
\begin{lemma}\label{conv2}
Let $\psi \in C_c^\infty(B(0,1))$.  Then
\begin{align*}
    \lim_{n\to \infty} \int_{B(0,1)} \psi \eta_i^n\wedge\theta_j^n  = \int_{B(0,1)} \psi \tilde\eta_i\wedge \tilde \theta_j,
\end{align*}
for $1\le i,j \le k$.
\end{lemma}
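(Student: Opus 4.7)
My plan is to exploit the closed/exact structure of the pair $(\eta_i^n, \theta_j^n)$: by Lemma~\ref{conv1}(ii) we have $\theta_j^n = du_j^n$ with $u_j^n$ converging \emph{strongly} in $L^q(B(0,1))$, while $\eta_i^n$ is closed in the weak sense by \eqref{stillclosed}. Integration by parts will transfer a derivative off $\theta_j^n$, replacing a product of two only-weakly-convergent sequences with a product of one weakly and one strongly convergent sequence, for which passage to the limit is routine.

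The first step is to establish the identity
\begin{align*}
    \int_{B(0,1)} \psi\, \eta_i^n \wedge \theta_j^n = (-1)^{l+1}\int_{B(0,1)} d\psi \wedge \eta_i^n \wedge u_j^n,
\end{align*}
which follows formally from expanding $d(\psi\, \eta_i^n \wedge u_j^n)$ and applying Stokes' theorem, since $\psi$ is compactly supported in $B(0,1)$ and $d\eta_i^n = 0$ weakly. At the regularity $\eta_i^n \in L^p$, $u_j^n \in W^{1,q}$ with $1/p+1/q=1$, rigor comes from mollifying $\eta_i^n$: mollification commutes with the distributional $d$, so the mollified forms are smooth and closed, ordinary Stokes applies, and both sides of the identity are continuous in the $L^p$-norm of $\eta_i^n$. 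Because weak $L^p$-limits of weakly closed forms are again weakly closed (test against $d\phi$ for smooth $\phi$ and use $\eta_i^n \rightharpoonup \tilde\eta_i$), and because $d\tilde u_j = \tilde\theta_j$ by Lemma~\ref{conv1}(iii), the analogous identity with tildes also holds.

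It therefore suffices to show $\int d\psi \wedge \eta_i^n \wedge u_j^n \to \int d\psi \wedge \tilde\eta_i \wedge \tilde u_j$. Decomposing the difference as
\begin{align*}
    \int_{B(0,1)} d\psi \wedge \eta_i^n \wedge (u_j^n - \tilde u_j) + \int_{B(0,1)} d\psi \wedge (\eta_i^n - \tilde\eta_i) \wedge \tilde u_j,
\end{align*}
the first term is bounded via \eqref{wedgeestimate} and H\"older's inequality by $C(d)\|d\psi\|_\infty \sup_n \|\eta_i^n\|_p \|u_j^n - \tilde u_j\|_q$, which tends to $0$ by the uniform bound from Lemma~\ref{conv1} combined with the strong convergence in Lemma~\ref{conv1}(ii); the second term is a bounded linear functional on $L^p(B(0,1))$ applied to $\eta_i^n - \tilde\eta_i$ (since $\tilde u_j \in L^q$ and $d\psi$ is smooth and bounded), so it tends to $0$ by the weak convergence $\eta_i^n \rightharpoonup \tilde\eta_i$. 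The only real obstacle is the integration-by-parts step at the above low regularity, but this is handled routinely by the mollification argument described; everything else is a standard weak-strong pairing.
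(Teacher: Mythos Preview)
Your proof is correct and follows essentially the same approach as the paper: integration by parts using the weak closedness of $\eta_i^n$ to replace $\theta_j^n=du_j^n$ by $u_j^n$, then a weak--strong pairing argument via H\"older and Lemma~\ref{conv1}. The only cosmetic difference is organizational: the paper splits $\int \psi\,\eta_i^n\wedge\theta_j^n - \int \psi\,\tilde\eta_i\wedge\tilde\theta_j$ first and integrates by parts only on the term containing $\theta_j^n-\tilde\theta_j$, whereas you integrate by parts on each side separately (requiring the extra observation that $\tilde\eta_i$ is weakly closed) and split afterward; the estimates are otherwise identical.
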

\begin{proof}
Consider the difference,
\begin{align*}
    \biggl |\int_{B(0,1)}\psi \eta_i^n\wedge\theta_j^n  - \int_{B(0,1)} \psi \tilde\eta_i\wedge \tilde \theta_j  \biggr | &\le \biggl |\int_{B(0,1)}\psi \eta_i^n\wedge (\theta_j^n-\tilde \theta_j) \biggr |\\
    &+ \biggl |\int_{B(0,1)}\psi (\eta_i^n - \tilde \eta_i)\wedge\tilde \theta_j \biggr | \\
    & = I + II.
\end{align*}
Lemma \ref{conv1} gives that
\begin{align*}
     I &=  \biggl |\int_{B(0,1)} \psi \eta_i^n\wedge (du_j^n- d\tilde u_j) \biggr|.
\end{align*}
By integration by parts and the compact support of $\psi$,
\begin{align*}
     \int_{B(0,1)} \psi \eta_i^n\wedge d(u_j^n -\tilde u_j)&=(-1)^{l+1}\int_{B(0,1)}  d(\psi\eta_i^n)\wedge (u_j^n-\tilde u_j)\\
     & = (-1)^{l+1}\int_{B(0,1)}  d\psi \wedge \eta_i^n\wedge (u_j^n  -\tilde u_j)
\end{align*}
because $\eta_i^n$ is weakly closed and $\psi(u_j^n-\tilde u_j) \in W^{1,q}(\bb R^d)$. By \eqref{wedgeestimate},
\begin{align*}
     |d\psi \wedge \eta_i^n\wedge (u_j^n  -\tilde u_j)| \le C(d) |d\psi\wedge\eta_i^n||u_j^n  -\tilde u_j|,
\end{align*}
where $C(d)$ only depends on $d$.
By H\"older's inequality,
\begin{align*}
    I \le  C(d)\|d\psi\wedge\eta_i^n\|_p\|u_j^n - \tilde u_j\|_q.
\end{align*}
By Lemma \ref{conv1}, the term $\|d\psi\wedge\eta_i^n\|_p$ is bounded independently of $n$.  Lemma \ref{conv1} also gives that $u_i^n \to \tilde u_i$ in $L^q(B(0,1))$.  So  $\lim_{n\to \infty} |I| = 0$.  For the term $II$, by Lemma \ref{conv1}, $\eta_i^n \to \tilde \eta_i$ in $L^p(B(0,1))$ in the weak sense.  In addition, $\psi \tilde \theta_j \in L^q(B(0,1))$.  It follows that 
\begin{align*}
    \lim_{n\to \infty}II = \lim_{n\to \infty} \biggl |\int_{B(0,1)} (\eta_i^n - \tilde \eta_i)\wedge ( \psi \tilde \theta_j) \biggr |=0.
\end{align*}

\end{proof}

\section{Proof of Theorem \ref{mainthm}}\label{mainproof}
In this section we complete the proof of the main result.
\begin{lemma}
Let $\tilde \eta_i$ and $\tilde \theta_i$ be the forms constructed in Section \ref{rescale}.
For almost every $x\in B(0,1)$,
\begin{align}\label{zeropair2} 
    \tilde \eta_i \wedge \tilde \theta_j(x) = 0
\end{align}
when $i \ne j$.
\end{lemma}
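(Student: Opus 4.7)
The plan is to exploit the fact that, for $i\ne j$, the $d$-form $\alpha_i\wedge\beta_j$ on $M$ integrates to zero and is therefore exact on $M$. Since $M$ is closed, connected and orientable of dimension $d$, $H^d(M)\cong \bb R$ via integration, so the relation $\int_M\alpha_i\wedge\beta_j=0$ from Theorem \ref{poincare} forces $[\alpha_i\wedge\beta_j]=0$ in $H^d(M)$. By de Rham's theorem there exists a smooth $(d-1)$-form $\gamma_{ij}\in\Omega^{d-1}(M)$ with $d\gamma_{ij}=\alpha_i\wedge\beta_j$. Pulling back by $f$ and using that pullback commutes with the exterior derivative in the weak sense (the same mechanism underlying \eqref{stillclosed}), one gets
\begin{align*}
\eta_i\wedge\theta_j \;=\; f^*(\alpha_i\wedge\beta_j) \;=\; d(f^*\gamma_{ij})
\end{align*}
as weakly defined $d$-forms on $\bb R^d$. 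Applying $T_n^*$ and using $1/A(B_n) = 1/A(B_n)^{1/p+1/q}$, this becomes $\eta_i^n\wedge\theta_j^n \;=\; A(B_n)^{-1}\, d\bigl(T_n^*(f^*\gamma_{ij})\bigr).$

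Next I would show that the primitive is much smaller than $A(B_n)$. By Proposition \ref{pullbackestimate} and the quasiregularity inequality $\|Df\|^d\le KJ_f$,
\begin{align*}
|f^*\gamma_{ij}|^{d/(d-1)} \;\le\; C(d)\|\gamma_{ij}\|_\infty^{d/(d-1)}\|Df\|^d \;\le\; C(d,K)\|\gamma_{ij}\|_\infty^{d/(d-1)} J_f
\end{align*}
pointwise almost everywhere. The change of variables $y=T_n(x)$ then gives
\begin{align*}
\int_{B(0,1)}|T_n^*(f^*\gamma_{ij})|^{d/(d-1)} \;=\; \int_{B_n}|f^*\gamma_{ij}|^{d/(d-1)} \;\le\; C(d,K,M)\,A(B_n),
\end{align*}
so $\bigl\|A(B_n)^{-1}\,T_n^*(f^*\gamma_{ij})\bigr\|_{L^{d/(d-1)}(B(0,1))} \le C\,A(B_n)^{-1/d}$, which tends to zero since $A(B_n)\to\infty$ by Theorem \ref{aunbounded}.

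Finally, for any scalar test function $\psi\in C_c^\infty(B(0,1))$, integration by parts (i.e.\ the very definition of the weak exterior derivative) followed by \eqref{wedgeestimate} and H\"older's inequality give
\begin{align*}
\biggl|\int_{B(0,1)}\psi\,\eta_i^n\wedge\theta_j^n\biggr|
&= \frac{1}{A(B_n)}\biggl|\int_{B(0,1)} d\psi\wedge T_n^*(f^*\gamma_{ij})\biggr| \\
&\le C\,\|d\psi\|_\infty\, A(B_n)^{-1/d}\;\longrightarrow\; 0.
\end{align*}
Combined with Lemma \ref{conv2}, this forces $\int_{B(0,1)}\psi\,\tilde\eta_i\wedge\tilde\theta_j=0$ for every $\psi\in C_c^\infty(B(0,1))$. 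Since $\tilde\eta_i\wedge\tilde\theta_j$ is a top-degree form represented by a single $L^1$ density on $B(0,1)$, the fundamental lemma of the calculus of variations yields the pointwise vanishing \eqref{zeropair2} almost everywhere.

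The main delicacy is justifying the identity $f^*(d\gamma_{ij})=d(f^*\gamma_{ij})$ weakly for the merely $W^{1,d}_{\text{loc}}$ map $f$ and for a non-closed form $\gamma_{ij}$; this is exactly the technical ingredient behind \eqref{stillclosed} and is handled by the theory of weak pullbacks of differential forms by quasiregular maps referenced in Section \ref{forms} (Donaldson--Sullivan, Iwaniec--Martin). Everything else in the argument is a straightforward combination of the bounded primitive with the rescaling limits already established in Lemma \ref{conv1} and Lemma \ref{conv2}.
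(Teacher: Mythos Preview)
Your proposal is correct and follows essentially the same approach as the paper: both use de Rham's theorem to write $\alpha_i\wedge\beta_j=d\gamma_{ij}$ on $M$, pull back by $f$, integrate by parts against a test function $\psi$, and then bound $f^*\gamma_{ij}$ in $L^{d/(d-1)}$ via Proposition~\ref{pullbackestimate} and quasiregularity to obtain the decay $A(B_n)^{-1/d}\to 0$, concluding with Lemma~\ref{conv2}. The only cosmetic differences are that you work on $B(0,1)$ after rescaling (the paper works on $B_n$), and you use $\|d\psi\|_\infty$ where the paper uses $\|d\psi\|_{d}$; both choices are equivalent for the purpose of this estimate.
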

\begin{proof}
When $i\ne j$,
\begin{align*}
    \int_M \alpha_i\wedge \beta_j = 0,
\end{align*}
by \eqref{orthogonal}.  
By de Rham's theorem \cite[Corollary 5.8]{botttu}, there exists $\tau \in \Omega^{d-1}(M)$ such that $d\tau = \alpha_i\wedge\beta_j$.
Let $\psi \in C_c^\infty(B(0,1))$, using integration by parts and the compact support of $\psi$,
\begin{align*}
    \int_{B(0,1)}\psi \eta_i^n\wedge \theta_j^n &= \frac{1}{A(B_n)}\int_{B_n} \psi\biggl (\frac{x-a_n}{r_n}\biggr ) d(f^*\tau)(x) \\
    & = \frac{-1}{A(B_n)}\int_{B_n} d\biggl (\psi\biggl (\frac{x-a_n}{r_n}\biggr )\biggr)\wedge f^*\tau(x).
\end{align*}
By \eqref{wedgeestimate} and H\"older's inequality,
\begin{align*}
    \biggl |\int_{B(0,1)}\psi \eta_i^n\wedge \theta_j^n \biggr | \le \frac{C(d)}{A(B_n)}\|d\psi\|_{d,B(0,1)}\biggl( \int_{B_n} |f^*\tau|^{d/(d-1)} \biggr )^{(d-1)/d}.
\end{align*}
By Proposition \ref{pullbackestimate} and the quasiregularity of $f$,
\begin{align*}
    \biggl |\int_{B(0,1)}\psi \eta_i^n\wedge \theta_j^n\biggr | \le C(d)K^{(d-1)/d}\frac{\|d\psi\|_{d,B(0,1)}\|\tau\|_{\infty} }{A(B_n)} \biggl(\int_{B_n} J_f\biggr)^{(d-1)/d}.
\end{align*}
So
\begin{align*}
    \biggl |\int_{B(0,1)}\psi \langle \eta_i^n,\theta_j^n\rangle \biggr | \le C(K,M,d)\frac{\|d\psi\|_{d,B(0,1)}}{A(B_n)^{1/d}} \to 0
\end{align*}
as $n\to \infty$.
By Lemma \ref{conv2}, 
\begin{align*}
    \int_{B(0,1)}\psi\tilde \eta_i\wedge \tilde \theta_j =0.
\end{align*}
Since $\psi$ was an arbitrary test function,
$\tilde \eta_i\wedge \tilde \theta_j(x) = 0$ for almost every $x\in B(0,1)$.
\end{proof}

We have assumed that $k > \binom{d}{l}$.
This implies that for almost every $x \in B(0,1)$ there exists an $i \in \{1,\dots,k\}$ such that 
\begin{align}\label{zeronorm}
    \tilde \eta_i\wedge \tilde \theta_i(x)  = 0.
\end{align}
To see this, fix $x \in B(0,1)$ such that \eqref{zeropair2} holds for all pairs.
Let $\{\tilde \eta_{i_1}(x),\dots, \tilde \eta_{i_m}(x)\}$ be a basis for $\operatorname{span}(\{\tilde \eta_i(x)\}_{i=1}^k)\subset\bigwedge^l \bb R^d$.  Since dimension of $\bigwedge^l\bb R^d$ is $\binom{n}{l}$, we have that $m \le \binom{n}{l}$.  By our assumption $k > \binom{d}{l}$, so there exists a form $\tilde \eta_j \notin \{\tilde \eta_{i_1}(x),\dots, \tilde \eta_{i_m}\}$.
It follows that
\begin{align*}
    \tilde \eta_j\wedge \tilde \theta_j(x) &=  \sum_{a=1}^m\lambda_{i_a}  \tilde \eta_{i_a}\wedge \tilde \theta_j(x) \\
    & = 0
\end{align*}
by \eqref{zeropair2}. 

Therefore, for almost every $x\in B(0,1)$, one of the pairings $\tilde \eta_i\wedge \tilde \theta_i(x) $ must be $0$.
Let $D_i = \{x \in B(0,1) : \tilde \eta_i\wedge \tilde \theta_i(x) = 0\}$ and define $D_i^n = a_n + r_n D_i$.  Then $|B_n| = |\bigcup D_i^n|$ and
\begin{align*}
    A(\tfrac{1}{2}B_n) = \sum_{i=1}^k \int_{D_i^n\cap \frac{1}{2}B_n} J_f.
\end{align*}
For each $n\in \bb N$ there exists an $i$ so that
\begin{align*}
    \int_{D_i^n\cap \frac{1}{2}B_n} J_f \ge \frac{1}{k} A(\tfrac{1}{2}B_n) \ge \frac{A(B_n)}{kD(d)}.
\end{align*}
by \eqref{doubling}.  Taking a subsequence of the $n$ we can ensure that the $i$ is always the same.
\begin{lemma}\label{smallintegral}
For all $\epsilon >0$, there exists a compact set $C_i \subset D_i\cap B(0,\frac{1}{2})$ and an open set $E_i$ containing $D_i \cap B(0,\frac{1}{2})$  such that
\begin{align}\label{bigzero}
    \int_{C_i^n}J_f \ge \frac{A(B_n)}{2kD(d)},
\end{align}
where $C_i^n = a_n + r_nC_i$, and
\begin{align}\label{etasmall}
   \int_{E_i}|\tilde \eta_i\wedge \tilde \theta_i|< \epsilon.
\end{align}
\end{lemma}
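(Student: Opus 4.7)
The plan is to handle the two conditions essentially independently, obtaining $E_i$ from the outer regularity of the measure $|\tilde\eta_i \wedge \tilde\theta_i|\,dx$ on $B(0,1)$ and obtaining $C_i$ from the inner regularity of Lebesgue measure combined with the reverse Hölder inequality of Proposition \ref{improvedholder}.

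For \eqref{etasmall}, I would observe that $\tilde\eta_i \in L^p(B(0,1))$ and $\tilde\theta_i \in L^q(B(0,1))$ with $1/p+1/q = 1$, so by \eqref{wedgeestimate} and Hölder's inequality the function $|\tilde\eta_i\wedge\tilde\theta_i|$ is in $L^1(B(0,1))$. By definition of $D_i$, this function vanishes almost everywhere on $D_i \cap B(0,1/2)$. Since the measure $\mu(A) := \int_A |\tilde\eta_i\wedge\tilde\theta_i|\,dx$ is absolutely continuous with respect to Lebesgue measure and vanishes on $D_i\cap B(0,1/2)$, outer regularity of Lebesgue measure together with absolute continuity of the integral produces an open set $E_i \supset D_i\cap B(0,1/2)$ with $\mu(E_i) < \epsilon$.

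The interesting part is \eqref{bigzero}. Here I would first apply inner regularity of Lebesgue measure to select a compact set $C_i \subset D_i \cap B(0,1/2)$ whose complement $F := (D_i\cap B(0,1/2))\setminus C_i$ has Lebesgue measure smaller than a parameter $\delta > 0$ to be chosen. Writing $F^n = a_n + r_n F$, we have $|F^n| = r_n^d |F| \le \delta r_n^d$. The goal is to show
\[
\int_{F^n \cap \tfrac{1}{2}B_n} J_f \le \tfrac{1}{2kD(d)} A(B_n),
\]
so that subtracting from the hypothesis $\int_{D_i^n \cap \frac{1}{2}B_n} J_f \ge A(B_n)/(kD(d))$ yields \eqref{bigzero}. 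To do this, let $b > 1$ be the exponent from Proposition \ref{improvedholder} with conjugate $a$. Hölder's inequality on $F^n \cap \tfrac12 B_n$ gives
\[
\int_{F^n \cap \frac{1}{2}B_n} J_f \le |F^n|^{1/a}\left(\int_{\frac{1}{2}B_n} J_f^b\right)^{1/b},
\]
and Proposition \ref{improvedholder} bounds the right-hand $L^b$-norm by $C(d,M,K,b)\, |\tfrac12 B_n|^{1/b} A(B_n)/|B_n|$, which simplifies to a constant multiple of $A(B_n)/r_n^{d/a}$. Since $|F^n|^{1/a} \le \delta^{1/a} r_n^{d/a}$, the factors of $r_n$ cancel and we obtain $\int_{F^n \cap \frac{1}{2}B_n} J_f \le C(d,M,K,b)\,\delta^{1/a} A(B_n)$ uniformly in $n$. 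Choosing $\delta$ small enough makes this at most $A(B_n)/(2kD(d))$, which finishes the argument.

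The main obstacle, and really the conceptual heart of the lemma, is the uniform-in-$n$ bound just described: smallness of Lebesgue measure in the unrescaled picture must translate to smallness of the $J_f$-integral in the rescaled balls $B_n$, whose radii are completely uncontrolled. Without the reverse Hölder improvement $J_f \in L^b_{\text{loc}}$ of Proposition \ref{improvedholder} this transfer would fail; in particular, this is the point where the hypothesis $H^l(M) \ne 0$ (used to prove Proposition \ref{reverseholder}) is indispensable for the argument.
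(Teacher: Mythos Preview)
Your proposal is correct and follows essentially the same route as the paper: outer regularity of the $L^1$ density $|\tilde\eta_i\wedge\tilde\theta_i|$ for \eqref{etasmall}, and inner regularity of Lebesgue measure combined with H\"older's inequality and the reverse H\"older bound of Proposition~\ref{improvedholder} to make the $r_n$ factors cancel for \eqref{bigzero}. Your identification of the uniform-in-$n$ transfer of smallness as the key point, and its dependence on the higher integrability of $J_f$, matches the paper's reasoning exactly.
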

\begin{proof}
Fix $\epsilon > 0$.
By outer regularity, there exists a set $E_i$ containing  $D_i\cap B(0,\frac{1}{2})$ such that \eqref{etasmall} is satisfied.

To construct $C_i$, first note that for all $\delta >0$, there exists compact sets $C_i(\delta) \subset D_i\cap B(0,\frac{1}{2})$ such that
\begin{align*}
|(D_i\cap B(0,\tfrac{1}{2}))\setminus C_i(\delta)| <\delta.
\end{align*}
Let $C_i^n(\delta) = a_n + r_nC_i(\delta)$ and $D_i^n = a_n + r_nD_i$.  To simplify notation, denote $\frac{1}{2} D_i :=D_i \cap B(0,\frac{1}{2})$ and $\frac{1}{2} D_i^n :=a_n+r_n\frac{1}{2}D_i$.
By H\"older's inequality,
\begin{align*}
    \int_{\frac{1}{2} D_i^n\setminus C_i^n(\delta)} J_f \le |\tfrac{1}{2} D_i^n\setminus C_i^n(\delta)|^{1/a}\biggl (\int_{\frac{1}{2} D_i^n\setminus C_i^n(\delta)} J_f^{b}\biggr )^{1/b}
\end{align*}
where $\frac{1}{a}+ \frac{1}{b} = 1$ and $b>1$ can be chosen arbitrarily close to $1$.  Continuing the calculation, we get
\begin{align*}
     \int_{\frac{1}{2} D_i^n\setminus C_i^n(\delta)} J_f &= r_n^{d/a}|\tfrac{1}{2} D_i\setminus C_i(\delta)|^{1/a}\biggl (\int_{\frac{1}{2} D_i^n\setminus C_i^n(\delta)} J_f^{b}\biggr )^{1/b} \\
    & \le  r_n^{d/a}|\tfrac{1}{2} D_i\setminus C_i(\delta)|^{1/a} \biggl (\int_{\frac{1}{2}B_n} J_f^{b}\biggr )^{1/b}.
\end{align*}
We now use the higher integrability for Jacobians of quasiregular mappings given in Proposition \ref{improvedholder},
\begin{align*}
     r_n^{d/a}|\tfrac{1}{2} D_i\setminus C_i(\delta)|^{1/a} \biggl (\int_{\frac{1}{2}B_n} J_f^{b}\biggr )^{1/b} & \le C(K,M,d,b)|\tfrac{1}{2} D_i\setminus C_i(\delta)|^{1/a} r_n^{d/a} r_n^{-d/a}\int_{B_n} J_f \\
    & =  C(K,M,d,b) |\tfrac{1}{2} D_i\setminus C_i(\delta)|^{1/a} A(B_n).
\end{align*}
We can choose $\delta$ to be arbitrarily small so that $|\frac{1}{2} D_i\setminus C_i(\delta)|^{1/a} < \frac{1}{2C(K,M,d,b) k D(d)}$.  This proves the lemma.
\end{proof}

We now have all of the ingredients to finish the proof for Theorem \ref{mainthm}.  Let $C_i,E_i$ be the sets given in Lemma \ref{smallintegral}. 
Define $C_i^n, E_i^n$ similarly as above.  Let $\psi \in C_c^\infty(B_n)$ and consider the following difference,
\begin{align*}
    \biggl | \int_{B_n} \psi^d  \eta_i\wedge\theta_i - \int_{B_n} \psi^d J_f  \biggr | = \biggl | \int_{B_n} \psi^df^*(\alpha_i\wedge\beta_i - V)\biggr |,
\end{align*}
where $V$ is the volume form on $M$.
We assume that $\operatorname{vol}(M) = 1$, so the $d$-form $\alpha_i\wedge\beta_i - V$ integrates to $0$ on $M$.  By de Rham's theorem, it is exact and $\alpha_i\wedge\beta_i - V = d\tau$, where $\tau \in \Omega^{d-1}(M)$.  We apply integration by parts and H\"older's inequality,
\begin{align*}
    \biggl | \int_{B_n} \psi^df^*(\alpha_i\wedge\beta_i - V)\biggr | &= \biggl | \int_{B_n} \psi^d d(f^*\tau) \biggr | \\
    & = \biggl |d\int_{B_n} \psi^{d-1}d\psi\wedge f^*\tau \biggr |.
\end{align*}
By \eqref{wedgeestimate} and H\"older's inequality,
\begin{align*}
     \biggl | \int_{B_n} \psi^df^*(\alpha_i\wedge\beta_i - V)\biggr |\le C(d) \|d\psi\|_{d,B_n}\biggl (\int_{B_n}\psi^d|f^*\tau|^{d/(d-1)}\biggr )^{(d-1)/d}.
\end{align*}
By Proposition \ref{pullbackestimate} and the quasiregularity of $f$, 
\begin{align*}
     \biggl | \int_{B_n} \psi^df^*(\alpha_i\wedge\beta_i - V)\biggr |\le C(d)K^{(d-1)/d}\|\tau\|_{\infty}^{(d-1)/d}d\|d\psi\|_{d,B_n}\biggl (\int_{B_n}\psi^dJ_f\biggr )^{(d-1)/d}.
\end{align*}
Dividing by $\int_{B_n}\psi^d J_f$ yields,
\begin{align*}
    \biggl | \frac{1}{(\int_{B_n}\psi^dJ_f)}\int_{B_n} \psi^d  \eta_i\wedge \theta_i - 1  \biggr | \le C(K,d,M)\|d\psi\|_{d,B_n}\biggl (\int_{B_n}\psi^dJ_f\biggr )^{-1/d}
\end{align*}
Choose $\psi$ so that $\psi >0, \psi \equiv 1$ on $C_i^n$ and $\psi \equiv 0$ outside $E_i^n$.  If $\tilde \psi = \psi((x-a_n)/r_n)$, then $\tilde \psi$ is a bump function that is $1$ on $C_i$ and $0$ outside $E_i$.  And
\begin{align*}
    \|d\psi\|_{d,B_n} = \|d\tilde\psi\|_{d,B(0,1)},
\end{align*}
since $C_i^n,E_i^n$ are conformally equivalent to $C_i,E_i$ respectively. In other words, the term with $\psi$ is independent of $n$.  This gives that
\begin{align}\label{nearone}
    \biggl | \frac{1}{(\int_{B_n}\psi^dJ_f)}\int_{B_n} \psi^d \eta_i\wedge\theta_i - 1  \biggr | \le C(K,d,M)  \|d\tilde\psi\|_{d,B(0,1)} A(B_n)^{-1/d},
\end{align}
which goes to $0$ as $n \to \infty$.

By Lemma \ref{conv2},
\begin{align*}
    \lim_{n\to \infty} \biggl |\frac{1}{A(B_n)}\int_{B_n} \psi^d \eta_i^n\wedge \theta_i^n\biggr | &=\biggl| \int_{B(0,1)} \tilde{\psi}^d\tilde \eta_i\wedge\tilde \theta_i\biggr | \\
    & \le \int_{B(0,1)} \tilde{\psi}^d |\tilde \eta_i\wedge \tilde \theta_i|
\end{align*}
Since the support of $\tilde \psi$ is contained in $E_i$,
\begin{align*}
    \int_{B(0,1)} \tilde{\psi}^d|\tilde \eta_i\wedge\tilde \theta_i| & \le  \int_{E_i} |\tilde \eta_i\wedge \tilde \theta_i|\\
    & < \epsilon,
\end{align*}
by \eqref{etasmall}.
So, for $n$ sufficiently large, we have that
\begin{align}\label{e1}
    \biggl |\frac{1}{A(B_n)}\int_{B_n} \psi^d \eta_i\wedge\theta_i\biggr | \le 2\epsilon.
\end{align}
By \eqref{bigzero},
\begin{align}\label{e2}
    \biggl (\int_{B_n}\psi^dJ_f\biggr )^{-1/d} \le \biggl (\int_{C_i^n}J_f\biggr )^{-1/d} \le (2D(d)k)^{1/d} A(B_n)^{-1/d}.
\end{align}
Therefore, using \eqref{e1} and \eqref{e2},
\begin{align*}
    \frac{1}{(\int_{B_n}\psi^dJ_f)}\biggl |\int_{B_n} \psi^d  \eta_i\wedge\theta_i\biggr | &= \frac{A(B_n)}{(\int_{B_n} \psi^dJ_f)}\biggl |\frac{1}{A(B_n)}\int_{B_n} \psi^d  \eta_i\wedge\theta_i\biggr | \\
    & \le \frac{A(B_n)}{(\int_{B_n} \psi^dJ_f)}2\epsilon \\
    & \le \frac{A(B_n)}{(\int_{C_i^n}J_f)}2\epsilon \\
    & \le 4kD(d)\epsilon.
\end{align*}
This bound is independent of $n$ and  contradicts \eqref{nearone} for small $\epsilon$ and large $n$.  Therefore $|\bigcup D_i| \ne |B(0,1)|$ and $k \le \binom{d}{l}$.  This proves Theorem \ref{mainthm}.

\nocite{*}


\begin{thebibliography}{99}

\bibitem{bojarskiiwaniec} B. Bojarski, T. Iwaniec, Analytical foundations of the theory of quasiconformal mappings in $\mathbb R^n$, \emph{Ann. Acad. Sci. Fenn. Ser. A I Math.} 8 (1983), 257--324.

\bibitem{bonkheinonen}  M. Bonk, J. Heinonen, Quasiregular mappings and cohomology, \emph{Acta Math.} 186 (2001), 219--238.

\bibitem{botttu} R. Bott, L.W. Tu, \emph{Differential Forms in Algebraic Topology}, Springer-Verlag, Berlin, 1982.

\bibitem{donaldsonsullivan} S.K. Donaldson, D.P. Sullivan, Quasiconformal {$4$}-manifolds, \emph{Acta Math.} 163 (1989), 181--252.

\bibitem{drasinpankka} D. Drasin, P. Pankka, Sharpness of {R}ickman's {P}icard Theorem in all dimensions, \emph{Acta Math.} 214 (2015), 209--306

\bibitem{eremenkolewis} A. Eremenko, J.L. Lewis, Uniform limits of certain $A$-harmonic functions with applications to quasiregular mappings, \emph{Ann. Acad. Sci. Fenn. Ser. A I Math.} 16 (1991), 361--375.

\bibitem{gehring} F.W. Gehring, The {$L^{p}$}-integrability of the partial derivatives of a quasiconformal mapping, Acta Math. 130 (1973), 265--277.

\bibitem{gromov1981} M. Gromov, Hyperbolic manifolds, groups and actions, \emph{ Ann. of Math. Stud.} 97 (1981), 183--213.

\bibitem{gromov2007} M. Gromov, \emph{Metric Structures for {R}iemannian and Non-{R}iemannian Spaces}, Modern Birkh\"auser Classics, Birkh\"auser Boston, Inc., Boston, MA, 2007.

\bibitem{guilleminpollack} V. Guillemin, A. Pollack, \emph{Differential Topology}, Prentice-Hall, Inc., Englewood Cliffs, N.J., 1974.

\bibitem{iwanieclutoborski} T. Iwaniec, A. Lutoborski, Integral estimates for null Lagrangians, \emph{Arch. Rational. Mech. Anal.} 125 (1993), 25--79.

\bibitem{iwaniecmartin} T. Iwaniec, G. Martin, Quasiregular mappings in even dimensions, \emph{Acta Math.} 170 (1993), 29--81.

\bibitem{jormakka} J. Jormakka, \emph{The Existence of Quasiregular Mappings From $\bb R^3$ to Closed Orientable 3-Manifolds}. Ann. Acad. Sci. Fenn. Ser. A I Math. Dissertationes, 69, Acad. Sci. Fennica, Helsinki, 1998.

\bibitem{kangasniemi} I. Kangasniemi, Sharp cohomological bound For uniformly quasiregularly elliptic manifolds, \emph{Preprint}, https://arxiv.org/abs/1711.11410, (2017).

\bibitem{lehtovirtanen} O. Lehto, K.I. Virtanen, \emph{Quasiconformal Mappings in the Plane}, Springer-Verlag, New York, 1973.

\bibitem{lewis} J. L. Lewis, {P}icard's theorem and {R}ickman's theorem by way of {H}arnack's inequality, \emph{Proc. Amer. Math. Soc.},122,(1994),199--206.

\bibitem{pankka} P. Pankka, Mappings of bounded mean distortion and cohomology, \emph{Geom. Funct. Anal.}, 20, (2010), 229--242.

\bibitem{rickman1980} S. Rickman, On the number of omitted values of entire quasiregular mappings, \emph{J. Analyse Math.}, 37, (1980), 100--117.

\bibitem{rickman1985} S. Rickman, The analogue of {P}icard's theorem for quasiregular mappings in dimension three, \emph{Acta Math.}, 154, (1985), 195--242.

\bibitem{rickman1993} S. Rickman, \emph{Quasiregular Mappings}, Springer-Verlag, Berlin, 1993.

\bibitem{vsc} N.T. Varopoulos, L. Saloff-Coste, T. Coulhon, \emph{Analysis and Geometry on Groups}, Cambridge University Press, Cambridge, 1992.

\end{thebibliography}
\end{document}